\documentclass[a4paper,12pt]{amsart}

\usepackage{amsmath,amstext,amssymb,amsopn,amsthm}
\usepackage{url}
\usepackage{mathtools}

\usepackage[margin=30mm]{geometry}
\usepackage{eucal,mathrsfs,dsfont}

\newtheorem{theorem}{Theorem}[section]
\newtheorem{corollary}[theorem]{Corollary}
\newtheorem{lemma}[theorem]{Lemma}
\newtheorem{proposition}[theorem]{Proposition}

\theoremstyle{definition}

\newtheorem{definition}[theorem]{Definition}

\theoremstyle{remark}
\newtheorem{remark}[theorem]{Remark}

\numberwithin{equation}{section}

\newcommand{\eps}{\varepsilon}

\newcommand{\calA}{\mathcal{A}}
\newcommand{\calF}{\mathcal{F}}
\newcommand{\calG}{\mathcal{G}}
\newcommand{\calD}{\mathcal{D}}
\newcommand{\calGY}{\mathcal{G}^{(Y,S)}}
\newcommand{\calP}{\mathcal{P}}

\newcommand{\I}{\mathds{1}}
\newcommand{\Ee}{\operatorname{\mathds{E}}} 
\newcommand{\Pp}{\operatorname{\mathds{P}}} 
\newcommand{\R}{\mathds{R}}
\newcommand{\real}{\mathds{R}}
\newcommand{\N}{{\mathds{N}}}
\newcommand{\C}{{\mathds{C}}}
\newcommand{\Z}{{\mathds{Z}}}
\newcommand{\Rn}{{\R^n}}
\newcommand{\Rt}{{\R^2}}
\newcommand{\uni}{\mathds{S}}

\newcommand{\term}{\tau}

\newcommand{\wh}{\widehat}
\newcommand{\wt}{\widetilde}

\def\cadlag{c\`adl\`ag\ }

\DeclareMathOperator{\Arg}{Arg}
\DeclareMathOperator{\supp}{supp}

\title[Stationary distributions for jump processes]{Stationary distributions for jump processes with inert drift}
\author{K.\ Burdzy  T.\ Kulczycki \and R.L.\ Schilling}
\address{Krzysztof Burdzy, Department of Mathematics, Box 354350, University of Washington, Seattle, WA 98195, USA}
\email{burdzy@math.washington.edu}
\thanks{K.\ Burdzy was supported in part by NSF Grant DMS-0906743 and by grant N N201 397137, MNiSW, Poland.}
\address{Tadeusz Kulczycki, Institute of Mathematics, Polish Academy of Sciences, ul. Kopernika 18, 51-617 Wroc{\l}aw, Poland \newline Institute of Mathematics and Computer Science, Wroc{\l}aw University of Technology, Wybrzeze Wyspianskiego 27, 50-370 Wroc{\l}aw, Poland}
\email{t.kulczycki@impan.pl}
\thanks{T.\ Kulczycki was supported in part by grant N N201 373136, MNiSW, Poland.}
\address{Rene Schilling, Institut f\"ur Stochastik, TU Dresden, D-01062 Dresden, Germany.}
\email{rene.schilling@tu-dresden.de}
\thanks{R.L.\ Schilling was supported in part by DFG grant Schi 419/5-1.}

\pagestyle{headings}

\begin{document}

\begin{abstract}
We analyze jump processes $Z$ with ``inert drift'' determined by  a ``memory'' process  $S$. The state space of $(Z,S)$ is the  Cartesian product of the unit circle and the real line.  We prove that the stationary distribution of $(Z,S)$ is the product of the uniform probability measure and a Gaussian distribution.
\end{abstract}

\maketitle

\section{Introduction}\label{sec1}

We are going to find stationary distributions for jump processes with inert drift.
We will first review various sources of inspiration for this project, related models and results. Then we will discuss some technical aspects of the paper that may have independent interest.

 This  paper is concerned with the following system of stochastic differential equations ( the precise statement is in the next section),
\begin{align}\label{5.5.1}
    dY_t  &=  dX_t + W'(Y_t) S_t\, dt, \\
    dS_t  &=   W''(Y_t) \,dt, \label{5.5.2}
\end{align}
where $X$ is a stable  L\'evy  process and $W$ is a $C^5$ function.
This equation is similar to equation \cite[(4.1)]{BBCH}, driven by Brownian motion, but  in \eqref{5.5.1} the term $\frac 12 \, (A \nabla V)(X_t)\, dt$ from the first line of \cite[(4.1)]{BBCH} is missing.  An explanation for  this  can be found in heuristic calculations in  \cite[Example 3.7]{BW}.  The paper \cite{BW}  deals  with Markov processes with finite state spaces and (continuous-space) inert drifts. This class of processes is relatively easy to analyze from the technical point of view. It can be used to generate conjectures, for example,  \cite[Example 3.7]{BW}  contains a conjecture  about  the process defined by \eqref{5.5.1}-\eqref{5.5.2}.

The main result of this paper, i.e.\  Theorem \ref{uniq1}, is concerned with the stationary distribution of a transformation of $(Y,S)$. In order to obtain non-trivial results, we ``wrap'' $Y$ on the unit circle, so that the state space for the transformed process
$Y$
is compact. In other words, we consider
$(Z_t,S_t) = (e^{iY_t},S_t)$. The stationary distribution for
$(Z_t,S_t)$ is the product of the uniform distribution on the circle and the normal distribution.

The product form of the stationary distribution for a two-component Markov process is obvious if the two components are independent Markov processes. The product form is far from obvious if the components are not independent but it does appear in a number of contexts, from queuing theory to mathematical physics. The paper \cite{BW} was an attempt to understand this phenomenon  for a class of models.

One expects to encounter a Gaussian distribution as (a part of) the stationary distribution in some well understood situations. First, Gaussian distributions arise in the context of the Central Limit Theorem (CLT) and continuous limits of CLT-based models. Another class of examples of processes with Gaussian stationary measures comes from mathematical physics. The Gibbs measure is given by $c_1\exp(- c_2 \sum_{i,j} (x_i - x_j)^2)$ in some models, such as the Gaussian free  field, see \cite{shef}.  In such models, the Gaussian nature of the stationary measure arises because the strength of the potential between two elements of the system is proportional to their ``distance'' (as in Hooke's
law for  springs) and, therefore, the potential energy is proportional to the square of the distance between two elements.
Our model is different in that the square in the exponential function represents the ``kinetic energy'' (square of the drift magnitude) and not potential energy of a force. The unexpected appearance of the Gaussian distribution in some stationary measures was noticed in \cite{BW2004} before it was explored more deeply in \cite{BW,BBCH}.

The present article has a companion \cite{BKS} in which
we analyze a related jump
process with ``memory''. In that model, the memory process affects the rate of jumps but it does not add a drift to the jump process. The stationary distribution for that model is also the product of uniform probability measure and a Gaussian distribution.

An ongoing research project of one of the authors is concerned with Markov processes with inert drift when the noise (represented by $X$ in
\eqref{5.5.1}) goes to 0. In other words, one can regard the process $(Y,S)$ as a trajectory of a dynamical system perturbed by a small noise. No matter how small the noise is, the second component of the stationary measure will always be Gaussian. Although we do not study small noise asymptotics in this paper, it is clear from our results that the Gaussian character of the stationary distribution for the perturbed dynamical system does not depend on the Gaussian character of the noise---it holds for the stable noise.

Models of Markov processes with inert drift can represent the motion of an inert particle in a potential, with small noise perturbing the motion. Although such models are related to the Langevin equation (see \cite{Lach}), they are different. There are several recent papers devoted to similar models, see, e.g., \cite{BenLR,BenR1,BenR2,BRO}.

We turn to the technical aspects of the paper.
The biggest effort is directed at determining a core of the generator of the process. This is done by showing that the semigroup $T_t$ of the process $(Y_t,S_t)$ preserves $C_b^2$, see Theorem \ref{Tt}.
The main idea is based on an estimate of the smoothness of the stochastic flow of solutions to \eqref{5.5.1}-\eqref{5.5.2}. This result, proved in greater generality than that needed for our main results, is presented in Section \ref{sec3}, see Proposition \ref{path}. This proposition actually makes an assertion on the \emph{pathwise} smoothness of the flow.
It seems that Theorem \ref{Tt} and Proposition \ref{path} are of independent interest.

\subsection{Notation}
Since the paper uses a large amount of notation, we collect most frequently used symbols in the table below, for easy reference.

\bigskip\noindent
\begin{center}
\begin{footnotesize}
\begin{tabular}{l|p{10cm}}
$a\vee b$, $a\wedge b$
            & $\max(a,b)$, $\min(a,b)$;\\
\hline \\
$a_+$, $a_-$
            & $\max(a,0)$, $-\min(a,0)$;\\ \hline \\
$|x|_{\ell^1}$
            & $\displaystyle \sum_{j=1}^m |x_j|$ where $x=(x_1,\ldots, x_m)\in\R^m$;\\ \hline \\
$e_k$ & the $k$-th unit base vector in the usual orthonormal basis for $\R^n$;\\ \hline \\
$\calA_{\alpha}$
            & $\displaystyle \alpha \Gamma\left(\frac{1 + \alpha}2\right) \frac{2^{\alpha - 1}}{\sqrt\pi\,\Gamma\big(1-\frac\alpha2\big)}$, \; $\alpha\in (0,2)$;\\ \hline \\
$D^\alpha$
            & $\displaystyle \frac{\partial^{|\alpha|}}{\partial x_1^{\alpha_1} \cdots \partial x_d^{\alpha_d}}$, \; $\alpha=(\alpha_1,\ldots,\alpha_d)\in\N_0^d$;\\ \hline \\
$C^k$       & $k$-times continuously differentiable functions;\\ \hline \\
$C^k_b$, $C^k_c$, $C^k_0$
            & functions in $C^k$ which, together with all their derivatives up to order $k$, are ``bounded'', are ``compactly supported'', and ``vanish at infinity'',  respectively; \\ \hline \\
$\|f\|_{\infty,B}$
            & $\displaystyle \sup_{x\in B}|f(x)|$ for  $f:\R^n\to\R$;\\ \hline \\
$\|D^{(j)} f\|_{\infty,B}$
            & $\displaystyle \sum_{|\alpha|=j} \|D^\alpha f\|_{\infty,B}$;\\ \hline \\
$\|f\|_{(j),B}$, $\|f\|_{(j)}$
            & $\displaystyle \sum_{|\alpha|\leq j} \sup_{x\in B} |D^\alpha f(x)|$, resp.\ $\displaystyle\sum_{|\alpha|\leq j} \|D^\alpha f\|_\infty$;\\ \hline \\
$\|D^{(j)} V\|_{\infty,B}$, $\|D^{(j)} V\|_{\infty}$
            & $\displaystyle \sum_{|\alpha|=j}\sum_{k=1}^n \sup_{x\in B} |D^\alpha V_k(x)|$, resp., $\displaystyle \sum_{|\alpha|=j}\sum_{k=1}^n   ||D^\alpha V_k||_{\infty}$ for any function $V:\R^n\to\R^n$;\\ \hline \\
$\|V\|_{(j),B}$, $\|V\|_{(j)}$
            & $\displaystyle \sum^j_{i= 0 } \|D^{(i)} V\|_{\infty, B}$, resp., $\displaystyle \sum^j_{i= 0 } \|D^{(i)} V\|_{\infty}$;\\ \hline \\
$\uni$      & $\{z\in \C \::\: |z|=1\}$ unit circle in $\C$.
\end{tabular}
\end{footnotesize}
\end{center}

\medskip\noindent
Constants $c$ without sub- or superscript are generic and may change their value from line to line.

\section{A jump process with a smooth drift}\label{sec2}

Let $\uni = \{z \in \C \::\: |z| = 1\}$ be the unit circle in $\C$. Consider a $C^5$ function $V: \uni \to \R$
which is not identically constant and put $W(x) = V(e^{ix})$, $x \in \R$. Let $X_t$ be a symmetric $\alpha$-stable  L\'evy process on $\R$ which has the jump density $\calA_{\alpha}\,|x - y|^{-1-\alpha}$, $\alpha \in (0,2)$. Let $(Y,S)$ be a Markov process with the state space $\Rt$ satisfying the following SDE,
\begin{equation}\label{sdemain}
    \begin{cases}
    \displaystyle
    dY_t  =  dX_t + W'(Y_t) S_t\, dt, \\[\medskipamount]
    \displaystyle
    dS_t  =   W''(Y_t) \,dt.
    \end{cases}
\end{equation}

\begin{lemma}\label{existence}
    The SDE \eqref{sdemain} has a unique strong solution which is a strong Markov process with c\`adl\`ag paths.
\end{lemma}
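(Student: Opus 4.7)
The plan is to combine a deterministic a priori bound on the $S$-component with a truncation argument, reducing to the standard strong existence and uniqueness theory for SDEs with globally Lipschitz coefficients driven by a Lévy process.

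First, since $V\in C^5(\uni)$ and $\uni$ is compact, $W(x)=V(e^{ix})$ is $C^5$ and $2\pi$-periodic on $\R$; in particular $\|W'\|_\infty$, $\|W''\|_\infty$, $\|W'''\|_\infty$ are finite, and both $W'$ and $W''$ are globally Lipschitz. The drift of the $Y$-equation, $b_Y(y,s)=W'(y)s$, is only locally Lipschitz in $(y,s)$ because of the multiplicative factor $s$, whereas the drift of the $S$-equation, $b_S(y)=W''(y)$, is bounded and Lipschitz.

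The crucial observation is that the $S$-equation has no noise term, so any solution satisfies
\[
    |S_t - S_0| \le t\,\|W''\|_\infty, \qquad t\ge 0,
\]
deterministically and pathwise. On a fixed interval $[0,T]$ this controls $S$ a priori in terms of $S_0$ and $T$, which neutralises the non-global Lipschitz character of $b_Y$ there.

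Next, I would truncate: choose a smooth cutoff $\chi_N\in C^\infty(\R)$ with $\chi_N(s)=s$ for $|s|\le N$, $\|\chi_N\|_\infty\le N+1$ and $\|\chi_N'\|_\infty\le 1$, and replace $S_t$ by $\chi_N(S_t)$ inside the drift of the $Y$-equation. The resulting coefficients are globally bounded and Lipschitz, so the standard Picard iteration for SDEs driven by a Lévy process yields a unique strong càdlàg solution $(Y^N,S^N)$ enjoying the strong Markov property. Because the noise enters additively in $Y$ and the $S$-equation is driftless in $X$, pathwise uniqueness for the truncated system follows from a direct Gronwall estimate on the deterministic difference $|Y_t-\wt Y_t|+|S_t-\wt S_t|$, with no moment condition on $X$ needed.

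Finally, by the a priori bound, if $|S_0|<N$ then $\tau_N:=\inf\{t\ge 0:|S^N_t|\ge N\}\ge (N-|S_0|)/\|W''\|_\infty$, a deterministic positive time tending to $\infty$ with $N$. On $[0,\tau_N)$ the truncated and untruncated equations agree, and pathwise uniqueness across different levels lets me patch the $(Y^N,S^N)$ consistently into a single strong càdlàg solution of \eqref{sdemain} on $[0,\infty)$. Pathwise uniqueness for \eqref{sdemain} follows by the same truncation, and the strong Markov property and càdlàg regularity pass to the limit from the truncated processes. The principal (and rather mild) obstacle is isolating the deterministic control on $|S_t|$ that defuses the non-global Lipschitz behaviour of the drift; once that is in hand, everything else is standard SDE theory.
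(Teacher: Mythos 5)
Your proposal is correct and follows essentially the same route as the paper: a deterministic a priori bound $|S_t-S_0|\le t\|W''\|_\infty$ (available because the $S$-equation has no noise), a truncation of the $s$-variable in the drift of the $Y$-equation to obtain globally Lipschitz coefficients (the paper uses $f_n(s)=(-n)\vee s\wedge n$ and cites Protter's theorems where you invoke Picard iteration), and the observation that for $n$ large the truncation never activates on a fixed time horizon, so the truncated solution is the genuine one. The only cosmetic difference is that you patch via stopping times $\tau_N$ while the paper simply chooses $n>|s|+t_0\|W''\|_\infty$ so that no patching is needed on $[0,t_0]$.
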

\begin{proof}
For every $n \in \N$ define the function $f_n: \R \to \R$ by $f_n(s) := (-n)\vee s\wedge n$. We consider for fixed $n \in \N$ the following SDE
\begin{equation}\label{sden}
    \begin{cases}
        \displaystyle
        dY_t^{(n)}  =  dX_t + W'(Y_t^{(n)}) f_n(S_t^{(n)})\, dt, \\[\medskipamount]
        \displaystyle
        dS_t^{(n)}  =  W''(Y_t^{(n)})\, dt.
    \end{cases}
\end{equation}
Note that $\Rt \ni (y,s) \mapsto W'(y) f_n(s)$ is a Lipschitz function. By \cite[Theorem V.7]{Pr} and \cite[Theorems V.31, V.32]{Pr} the SDE \eqref{sden} has a unique strong solution which has the strong Markov property and c\`adl\`ag paths for every fixed $n \in \N$.

Now fix $t_0 < \infty$ and a starting point $\Rt \ni (y,s) = (Y_0^{(n)},S_0^{(n)} )$. Note that for any $t \le t_0$ we have
$$
    \left|S_t^{(n)}\right|
    = \left|S_0^{(n)} + \int_0^t W''(Y_s^{(n)}) \, ds\right|
    \leq |s| + t_0 \|W''\|_{\infty}.
$$
Pick $n > |s| + t_0 \|W''\|_{\infty}$, $n \in \N$. For such $n$ and any $t \le t_0$, the process $(Y_t,S_t) := (Y_t^{(n)},S_t^{(n)} )$ is a solution to \eqref{sdemain} with starting point $(y,s)$.
 This shows that for any fixed starting point $(y,s) = (Y_0,S_0)$ and fixed $t_0 < \infty$ the SDE \eqref{sdemain} has a unique strong solution up to time 
$t_0$. The solution
 is strong Markov and has c\`adl\`ag paths. Since $t_0 < \infty$ and the starting point $(y,s)$  are
 arbitrary, the lemma follows.
\end{proof}

We will now introduce some notation. Let
$\N$ be  the  positive integers and  $\N_0 = \N \cup \{0\}$.
 For any $f: \uni \to \R$ we set
$$
    \tilde{f}(x) := f(e^{ix}), \quad x \in \R.
$$
We say that $f: \uni \to \R$ is \emph{differentiable} at $z= e^{ix}$, $x \in \R$, if and only if $\tilde{f}$ is differentiable at $x$ and we put
$$
    f'(z) := (\tilde{f})'(x), \quad \text{where} \quad  z = e^{ix}, \quad x \in \R.
$$
Analogously, we say that $f: \uni \to \R$ is \emph{$n$ times differentiable}  at $z= e^{ix}$, $x \in \R$, if and only if $\tilde{f}$ is $n$ times differentiable at $x$ and we write
$$
    f^{(n)}(z) = (\tilde{f})^{(n)}(x), \quad \text{where} \quad  z = e^{ix}, \quad x \in \R.
$$
In a similar way we define for $f: \uni \times \R \to \R$
\begin{equation}\label{ftilde2}
    \tilde{f}(y,s) = f(e^{iy},s), \quad y,s \in \R.
\end{equation}
We say that $D^\alpha f(z,s)$, $z = e^{iy}$, $y,s \in \R$, $\alpha\in\N_0^2$, exists if and only if $D^\alpha\tilde{f}(y,s)$ exists and we set
$$
    D^\alpha f(z,s) = D^\alpha\tilde{f}(y,s), \quad \text{where} \quad  z = e^{iy}, \quad  y,s \in \R.
$$
When writing
 $C^2(\uni)$, $C_c^2(\uni \times \R)$,
etc., we are referring to the derivatives defined above.

Let
\begin{align}\label{Zdef}
Z_t = e^{i Y_t}.
\end{align}
Then $(Z,S)$ is ``a symmetric $\alpha$-stable process with inert drift wrapped on the unit circle''.
In general, a function of a (strong) Markov process is not any longer a Markov process. We will show that the ``wrapped'' process $(Z_t,S_t)=(e^{iY_t},S_t)$ is a strong Markov process because the function $W(x) = V(e^{ix})$ is periodic.

\begin{lemma}\label{sol-periodic}
    Let $(Y_t,S_t)$ be the solution of the SDE \eqref{sdemain}. Then
    $$
        \Pp^{(y + 2 \pi,s)}(Y_t \in A + 2 \pi, \, S_t \in B) = \Pp^{(y,s)}(Y_t \in A, \, S_t \in B)
    $$
    holds for all $(y,s)\in\R^2$ and all Borel sets $A,B\subset\R$.
\end{lemma}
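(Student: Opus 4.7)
The proof will hinge on the $2\pi$-periodicity of the coefficients $W'$ and $W''$, which is inherited directly from the fact that $W(x) = V(e^{ix})$ is a composition with $e^{ix}$. The plan is to shift a solution starting from $(y,s)$ by $2\pi$ in the first coordinate and verify that the shifted process is again a strong solution of \eqref{sdemain}, now starting from $(y+2\pi, s)$; then invoke pathwise uniqueness from Lemma \ref{existence} to conclude equality in law.

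More concretely, let $(Y_t, S_t)$ denote the unique strong solution to \eqref{sdemain} with initial condition $(y,s)$, driven by the stable process $X$. I would define
\begin{equation*}
    \wt Y_t := Y_t + 2\pi, \qquad \wt S_t := S_t.
\end{equation*}
Then $(\wt Y_0, \wt S_0) = (y+2\pi, s)$, and since $W'$ and $W''$ are both $2\pi$-periodic, substituting yields
\begin{align*}
    d\wt Y_t &= dY_t = dX_t + W'(Y_t)\,S_t\,dt = dX_t + W'(\wt Y_t)\,\wt S_t\,dt,\\
    d\wt S_t &= dS_t = W''(Y_t)\,dt = W''(\wt Y_t)\,dt.
\end{align*}
Hence $(\wt Y, \wt S)$ is a strong solution of \eqref{sdemain} driven by the same $X$, with initial data $(y+2\pi,s)$. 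By the uniqueness part of Lemma \ref{existence}, this process is indistinguishable from the canonical solution starting at $(y+2\pi,s)$, so the two have identical finite-dimensional distributions.

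To extract the statement of the lemma, I would then write, for arbitrary Borel sets $A, B \subset \R$,
\begin{equation*}
    \Pp^{(y+2\pi,s)}(Y_t \in A+2\pi,\, S_t \in B)
    = \Pp^{(y,s)}(\wt Y_t \in A+2\pi,\, \wt S_t \in B)
    = \Pp^{(y,s)}(Y_t \in A,\, S_t \in B),
\end{equation*}
where the last equality uses $\wt Y_t = Y_t + 2\pi$ and $\wt S_t = S_t$. There is no real obstacle here; the only point requiring a brief remark is that strong uniqueness (established in Lemma \ref{existence}) applies to \eqref{sdemain} on the whole line and not only locally, so the comparison of the two solutions is valid for all $t \ge 0$, and one should be careful that the shift does not interact with the truncation argument used to build the solution in Lemma \ref{existence} (the truncation level $n$ can simply be chosen large enough to dominate $|s| + t_0\|W''\|_\infty$ uniformly for both starting points on the interval $[0,t_0]$).
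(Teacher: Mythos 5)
Your proposal is correct and follows essentially the same argument as the paper: exploit the $2\pi$-periodicity of $W'$ and $W''$ to show that the solution shifted by $2\pi$ in the first coordinate is again a solution, then invoke pathwise uniqueness from Lemma \ref{existence}. The only cosmetic difference is the direction of the shift (you shift the solution from $(y,s)$ up by $2\pi$, whereas the paper shifts the solution from $(y+2\pi,s)$ down), which is immaterial.
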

\begin{proof}
    Denote by $(Y_t^y,S_t^s)$ the unique solution of the SDE \eqref{sdemain} with initial value $(Y_0^y,S_0^s)=(y,s)$. We assume without loss of generality that $X_0 \equiv 0$.  By definition, the process $(Y_t^{y+2\pi},S_t^s)$ solves
    \begin{equation*}
    \begin{cases}
    \displaystyle
    \hat{Y}_t =  y + 2\pi + X_t + \int_0^t W'(\hat{Y}_r) \hat{S}_r \, dr,\\[\medskipamount]
    \displaystyle
    \hat{S}_t = s + \int_0^t W''(\hat{Y}_r) \, dr.
    \end{cases}
\end{equation*}
Since the function $W$ is periodic with period
$2 \pi$, we know that
 $W'(\hat{Y}_r) = W'(\hat{Y}_r - 2 \pi)$ and $W''(\hat{Y}_r) = W''(\hat{Y}_r - 2 \pi)$. Therefore, $(Y_t^{y+2\pi},S_t^s)$ solves the system
\begin{equation*}
    \begin{cases}
    \displaystyle
    \hat{Y}_t =  y + 2\pi + X_t + \int_0^t W'(\hat{Y}_r - 2 \pi) \hat{S}_r \, dr,\\[\medskipamount]
    \displaystyle
    \hat{S}_t = s + \int_0^t W''(\hat{Y}_r - 2 \pi) \, dr.
    \end{cases}
\end{equation*}
By subtracting $2 \pi$ from both sides of the first equation we get
\begin{equation*}
    \begin{cases}
    \displaystyle
    \hat{Y}_t - 2 \pi =  y +  X_t + \int_0^t W'(\hat{Y}_r - 2 \pi) \hat{S}_r \, dr,\\[\medskipamount]
    \displaystyle
    \hat{S}_t = s + \int_0^t W''(\hat{Y}_r - 2 \pi) \, dr.
    \end{cases}
\end{equation*}
Since the solutions are unique, this shows that 
$(Y_t^{y+2\pi},S_t) = (Y_t^{y}+2\pi,S_t)$
 from which the claim follows.
\end{proof}
We can now use a rather general result on transformations of the state space due to Dynkin \cite[10.25, Theorem 10.13]{dyn1}, see also Glover \cite{glo} and Sharpe \cite[Section 13]{sha}.

\begin{corollary}\label{wrap-mp}
Let $\gamma : \R^2 \to \uni\times\R$, $\gamma(y,s):= (e^{iy},s)$ and $(Y_t,S_t)$ be the unique, c\`adl\`ag strong Markov solution of
the SDE \eqref{sdemain}. Then $(Z_t,S_t)=(e^{iY_t},S_t)$ is also a
strong Markov process. Let $P_t((y,s),A\times B)$
denote the transition function of $(Y,S)$ and
 $P^{\uni}_t((y,s),A\times B)$
the transition function  of $(Z,S)$.
Then for $y,s \in \R$ and Borel sets $A,B\subset \R$,
    \begin{gather*}
        P_t^\uni(\gamma(y,s),A\times B) = P_t((y,s),\gamma^{-1}(A\times B))
    \end{gather*}
\end{corollary}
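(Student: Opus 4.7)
The plan is to verify the hypotheses of the transformation theorem of Dynkin (\cite[10.25, Theorem 10.13]{dyn1}), which states that if $\gamma$ is a measurable map from one state space to another and if the transition function of a Markov process is constant on the fibres of $\gamma$, in the sense that $P_t((y,s),\gamma^{-1}(C))$ depends on $(y,s)$ only through $\gamma(y,s)$, then the image process $\gamma(Y_t,S_t)$ is itself strong Markov with transition function defined by $P_t^\uni(\gamma(y,s),C) := P_t((y,s),\gamma^{-1}(C))$. Everything I need is therefore to identify the equivalence relation induced by $\gamma$ and to check the fibre-constancy condition.

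First I would note that $\gamma(y_1,s_1)=\gamma(y_2,s_2)$ if and only if $s_1=s_2$ and $y_2-y_1\in 2\pi\Z$, so the relevant equivalence classes are $\{(y+2\pi k,s)\::\:k\in\Z\}$. Thus the required condition reduces to
\[
    P_t\bigl((y+2\pi,s),\gamma^{-1}(C)\bigr) = P_t\bigl((y,s),\gamma^{-1}(C)\bigr)
    \quad\text{for every Borel } C\subset\uni\times\R,
\]
for all $(y,s)\in\R^2$ and $t\geq 0$. Sets of the form $\gamma^{-1}(A'\times B)$ with $A'\subset\uni$ Borel and $B\subset\R$ Borel are precisely sets of the form $A\times B$ where $A=\bigcup_{k\in\Z}(A_0+2\pi k)$ is $2\pi$-periodic. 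The fibre-constancy condition therefore amounts to
\[
    \Pp^{(y+2\pi,s)}(Y_t\in A,\,S_t\in B) = \Pp^{(y,s)}(Y_t\in A,\,S_t\in B)
\]
for all $2\pi$-periodic Borel sets $A$ and Borel $B$. This is exactly what Lemma~\ref{sol-periodic} delivers: applying it with $A$ replaced by the periodic set $A$ yields the displayed identity since $A+2\pi=A$.

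Having verified the fibre-constancy, I would invoke Dynkin's theorem to conclude that $(Z_t,S_t)=\gamma(Y_t,S_t)$ is a strong Markov process with the transition function specified in the statement. The c\`adl\`ag property of $(Z,S)$ is inherited from that of $(Y,S)$ (which is provided by Lemma~\ref{existence}) because $\gamma$ is continuous. The only non-routine step is the reduction of fibre-constancy to periodicity, and that is handled cleanly by Lemma~\ref{sol-periodic}; the actual application of the transformation theorem is essentially bookkeeping, since the map $\gamma$ is continuous, surjective and has countable fibres.
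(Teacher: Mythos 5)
Your proof is correct and follows essentially the same route as the paper: both verify Dynkin's fibre-constancy condition for $\gamma$ by observing that $\gamma^{-1}(A\times B)$ is $2\pi$-periodic in the first coordinate and then invoking Lemma \ref{sol-periodic} (iterated over $k\in\Z$). No gaps.
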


\begin{proof}
    All we have to do is to verify Dynkin's condition \cite[10.25.A]{dyn1} saying that
   $$
        P_t((y,s),\gamma^{-1}(A\times B)) = P_{ t }((y',s'),\gamma^{-1}(A\times B))
    $$
    holds for all Borel sets  $A \subset \uni$, $B\subset\R$ and all points $(y,s),(y',s')\in\R^2$ such that $\gamma(y,s)=\gamma(y',s')$. Clearly, $s=s'$ and $y-y' = 2j\pi$ for some $j\in \Z$.  Denote $f(y) = e^{iy}$ . Applying Lemma \ref{sol-periodic} repeatedly we find
    \begin{align*}
        \Pp^{(y,s)}\big((Y_t,S_t)\in \gamma^{-1}(A\times B)\big)
        &=  \Pp^{(y,s)}\big(Y_t\in  f^{-1}(A),\, S_t\in B\big)\\
        &=  \Pp^{(y + 2\pi j,s)}\big(Y_t\in  f^{-1}(A)  + 2 \pi j,\, S_t\in B\big)\\
        &=  \Pp^{(y + 2\pi j,s)}\big(Y_t\in  f^{-1}(A) ,\, S_t\in B\big)\\
        &= \Pp^{( y  +2\pi j, s)}\big((Y_t,S_t)\in \gamma^{-1}(A\times B)\big).
    \qedhere
    \end{align*}
\end{proof}

We are going to calculate  the  generators
of the processes $X_t$,  $(Y_t,S_t)$ and $(Z_t,S_t)$.

By $\calG^X$ let us denote the generator of the semigroup, 
defined
 on the Banach space $(C_b(\R),\|\cdot \|_\infty)$,
 of the process $X_t$. By $\calD(\calG^X)$ we denote the domain of $\calG^X$. It is well known that $C_b^2(\R) \subset \calD(\calG^X)$ and for $f \in C_b^2(\R)$  we have $\calG^X f = -(-\Delta)^{\alpha/2} f$, where
$$
-(-\Delta)^{\alpha/2} f(x) = \calA_{\alpha} \lim_{\eps \to 0^+} \int_{|y - x| > \eps} \frac{f(y) - f(x)}{|x - y|^{1 + \alpha}} \, dy, \quad x \in \R.
$$
If $f \in C_b^2(\R)$ is periodic with period $2 \pi$ then we have
\begin{equation}\label{periodic}\begin{aligned}
    -(-\Delta)^{\alpha/2} f(x)
    &=  \calA_{\alpha} \lim_{\eps \to 0^+} \int_{\pi > |y - x| > \varepsilon} \frac{f(y) - f(x)}{|x - y|^{1 + \alpha}} \, dy\\
    &\qquad+
    \calA_{\alpha} \sum_{n \in \Z \setminus \{0\}} \int_{\pi > |y - x|} \frac{f(y) - f(x)}{|x - y + 2n\pi|^{1 + \alpha}} \, dy.
\end{aligned}\end{equation}

In the sequel we will need the following auxiliary notation
\begin{definition}\label{class}
\begin{align*}
    C_*(\Rt)
    &:= \big\{f:\Rt \to \R \:: \: \exists N > 0 \: 
\supp(f) \subset \R \times [-N,N], \\
    &\qquad\qquad f \text{\ \ is bounded and uniformly
continuous on\ \ } \Rt \big\},\\
    C_*^2(\Rt) &:= C_*(\Rt) \cap C_b^2(\Rt).
\end{align*}
\end{definition}

 Let us define the
 transition semigroup $\{T_t\}_{t \ge 0}$ of the process $(Y_t,S_t)$  by
\begin{equation}\label{semigroupZS}
    T_tf(y,s) = \Ee^{(y,s)}f(Y_t,S_t), \quad y,s \in \R,
\end{equation}
for functions $f\in C_b(\R^2)$.
Let $\calGY$ be the generator of $\{T_t\}_{t \ge 0}$
and let
 $\calD(\calGY)$
be the
 domain of $\calGY$.
\begin{lemma}\label{gen1}
    We have $C_*^2(\Rt) \subset \calD(\calGY)$ and for $f \in C_*^2(\Rt)$,
\begin{equation}\label{GZS}
    \calGY f(y,s) =  -(-\Delta_y)^{\alpha/2} f(y,s) + W'(y) s f_y(y,s) + W''(y) f_s(y,s), \quad y,s \in \R.
\end{equation}
\end{lemma}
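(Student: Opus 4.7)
The plan is to derive a Dynkin-type formula for $(Y_t, S_t)$ by applying Itô's formula for semimartingales, and then identify $\calGY f$ by taking the right-derivative at $t=0$. Fix $f \in C_*^2(\Rt)$, and use the Lévy--Itô decomposition of the symmetric $\alpha$-stable process $X$ to write
\[
    dX_t = \int_{|z|\le 1} z \, \widetilde N(dt, dz) + \int_{|z|>1} z \, N(dt, dz),
\]
where $N$ is the Poisson random measure of jumps of $X$ with intensity $dr \otimes \nu(dz)$, $\nu(dz) = \calA_\alpha |z|^{-1-\alpha}\, dz$, and $\widetilde N$ is its compensator. In view of \eqref{sdemain}, the pair $(Y,S)$ is a semimartingale whose continuous bounded-variation part has densities $\bigl(W'(Y_r)S_r,\,W''(Y_r)\bigr)$ and whose jumps coincide with those of $X$.

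Applying Itô's formula to $f(Y_t, S_t)$ and combining the $\nu$-compensator with the Lebesgue drift integrals yields
\[
    f(Y_t, S_t) - f(y,s) = \int_0^t Lf(Y_r, S_r)\, dr + M_t,
\]
where $Lf$ denotes the right-hand side of \eqref{GZS}. The drift in \eqref{sdemain} supplies the terms $W'(y)s\, f_y + W''(y) f_s$, while the $\nu$-compensator produces $\int_\R \bigl[f(y+z,s) - f(y,s) - z f_y(y,s)\I_{|z|\le 1}\bigr]\nu(dz)$, which by symmetry of $\nu$ equals $-(-\Delta_y)^{\alpha/2} f(y,s)$ in the principal-value sense. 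The remainder $M_t$ is the stochastic integral against $\widetilde N$; since $|f(y+z,s) - f(y,s)|^2 \le \min\bigl(\|f_y\|_\infty^2 z^2,\, 4\|f\|_\infty^2\bigr)$ is $\nu$-integrable, $M_t$ is a true $L^2$-martingale with mean zero.

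Taking expectations gives the Dynkin formula
\[
    T_t f(y,s) - f(y,s) = \Ee^{(y,s)} \int_0^t Lf(Y_r, S_r)\, dr.
\]
A direct inspection, using $V \in C^5(\uni)$ together with the bounded $s$-support built into the definition of $C_*^2(\Rt)$, shows that $Lf$ itself lies in $C_*(\Rt)$, so in particular $Lf$ is bounded and uniformly continuous on $\Rt$. Dividing by $t$ and letting $t \to 0^+$, pointwise convergence of $t^{-1}(T_tf(y,s) - f(y,s))$ to $Lf(y,s)$ follows from right-continuity of the paths combined with continuity of $Lf$ by bounded convergence. For the sup-norm convergence I will use uniform continuity of $Lf$ together with the deterministic bound $|S_r - s| \le r\|W''\|_\infty$ from Lemma~\ref{existence} and the decomposition $Y_r - y = X_r + \theta_r$, where the drift correction $\theta_r$ is uniformly bounded by $r\|W'\|_\infty(|s|+r\|W''\|_\infty)$ for $|s|\le N$, and the law of $X_r$ does not depend on the starting point.

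I expect the main technical obstacle to be the uniform (sup-norm) passage to the limit: because $-(-\Delta_y)^{\alpha/2}$ is nonlocal, the modulus of continuity of $Lf$ will need to be combined quantitatively with the small-time displacement of $(Y,S)$ by splitting into small and large jumps of $X$. Once this is in hand, the identification $\calGY f = Lf$ on $C_*^2(\Rt)$ follows immediately from the Dynkin formula.
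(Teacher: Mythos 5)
Your proposal is correct, but it takes a genuinely different route from the paper. The paper never invokes It\^o's formula or a Dynkin formula: it computes the difference quotient $h^{-1}(T_hf-f)$ directly, splitting $f(Y_h,S_h)-f(y,s)$ into an $s$-increment and a $y$-increment, Taylor-expanding each, and isolating the term $h^{-1}\Ee[f(y+X_h,s)-f(y,s)]$, whose uniform convergence to $-(-\Delta_y)^{\alpha/2}f$ is quoted as a known property of the stable semigroup; all error terms are then shown to vanish uniformly on the band $\R\times[-M_1,M_1]$ (off which the quotient is identically zero, exactly as you observe). Your route instead establishes the identity $T_tf-f=\Ee^{(y,s)}\int_0^t Lf(Y_r,S_r)\,dr$ via the L\'evy--It\^o decomposition and the semimartingale It\^o formula, and then differentiates at $t=0$. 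This is cleaner and more systematic --- once the Dynkin formula is in hand, the uniform limit reduces to the single fact that $Lf\in C_*(\Rt)$ plus the deterministic displacement bounds you list --- but it front-loads two pieces of work the paper avoids: verifying that the compensated jump integral is a true martingale (your $L^2$ bound does this) and, more substantively, proving that $(y,s)\mapsto -(-\Delta_y)^{\alpha/2}f(y,s)$ is bounded and \emph{jointly uniformly continuous}, which you dismiss as ``direct inspection''; it does hold (split the integral at $|z|\le\delta$, bounding that piece by $c\|f_{yy}\|_\infty\delta^{2-\alpha}$ and using uniform continuity of $f$ for $|z|>\delta$), but it deserves the short argument. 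Your closing worry about the nonlocality interfering with the $t\to0^+$ limit is actually unfounded: once $Lf\in C_*(\Rt)$ is established, the uniform passage to the limit needs nothing beyond the modulus of continuity of $Lf$ and $\Pp(\sup_{r\le t}|X_r|>\delta)\to0$, exactly as in the local case.
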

\begin{proof}
Let $f \in C_*(\Rt)$. Throughout the proof we will assume that $\supp(f) \subset \R \times (-M_0,M_0)$ for some $M_0 > 0$. Note that for any starting point
$(Y_0,S_0)=(y,s) \in \R \times [-M_0,M_0]$
 and all $0 \le t \le 1$,
$$
    |S_t| = \left|S_0 + \int_{0}^t W''(Y_r) \, dr \right| \le M_0 + \|W''\|_{\infty}.
$$
Put
$$
M_1 = M_0 + \|W''\|_{\infty}.
$$
Note that if $(y,s) \notin \R \times [-M_1,M_1]$ and $(Y_0,S_0) = (y,s)$ then for any $0 \le t \le 1$ we have
$$
    |S_t| = \left|S_0 + \int_{0}^t W''(Y_r) \, dr \right| > M_1 -  \|W''\|_{\infty}  = M_0,
$$
and, therefore,
 $f(Y_t,S_t) = 0$. It follows that for any $(y,s) \notin \R \times [-M_1,M_1]$ and $0 < h \le 1$ we have
$$
    \frac{\Ee^{(y,s)}f(Y_h,S_h) - f(y,s)}{h} = 0.
$$

We may, therefore, assume that $(y,s) \in \R \times [-M_1,M_1]$. We will also assume that
 $0 < h \le 1$.

As above we see that for any starting point
$(Y_0,S_0) =(y,s) \in \R \times [-M_1,M_1]$ and all $0 \le t \le 1$ we have
 $|S_t| \le M_1 + \|W''\|_{\infty}$. Set $M_2 := M_1 + \|W''\|_{\infty}$.
We assume without loss of generality that $X_0 \equiv 0$.
Then
\begin{align*}
    Y_t &= y + X_t + \int_0^t W'(Y_r) S_r \, dr, \\
    S_t &= s + \int_0^t W''(Y_r) \, dr.
\end{align*}
It follows that
\begin{align*}
    \frac{T_h f(y,s) - f(y,s)}{h}
    &= \frac{\Ee^{(y,s)} f(Y_h,S_h) - f(y,s)}{h} \\
    &= \frac{1}{h} \Ee^{(y,s)}[f(Y_h,S_h) - f(Y_h,s)] + \frac{1}{h}\Ee^{(y,s)}[f(Y_h,s) - f(y,s)] \\
    &= \text{I} + \text{II}.
\end{align*}
Using Taylor's theorem we find
\begin{align*}
    \text{I}
    &= \Ee^{(y,s)}\left[ \frac{1}{h} \frac{\partial f}{\partial s} (Y_h,s) \int_0^h W''(Y_r) \, dr + \frac{1}{2 h} \frac{\partial^2 f}{\partial s^2} (Y_h,\xi) \left(\int_0^h W''(Y_r) \, dr \right)^2 \right] \\
    &= \Ee^{(y,s)}\Bigg[ \frac{1}{h} \frac{\partial f}{\partial s} (Y_h,s) \int_0^h W''(y) \, dr + \frac{1}{h} \frac{\partial f}{\partial s} (Y_h,s) \int_0^h (W''(Y_r) - W''(y))\, dr \\
    &\qquad\qquad\quad+ \frac{1}{2 h} \frac{\partial^2 f}{\partial s^2} (Y_h,\xi) \left(\int_0^h W''(Y_r) \, dr \right)^2 \Bigg],
\end{align*}
where $\xi$ is a point between $s$ and $S_h$. Note that
\begin{align*}
    \Ee^{(y,s)}
    &\left[ \left| \frac{1}{h} \frac{\partial f}{\partial s} (Y_h,s) \int_0^h \big(W''(Y_r) - W''(y)\big)\, dr \right| \right] \\
    &\le \Ee^{(y,s)}\left[ \frac{1}{h} \left\|\frac{\partial f}{\partial s}\right\|_{\infty} \int_0^h  \left\{ \left( \|W'''\|_\infty \left|X_r + \int_0^r W'(Y_t) S_t \, dt \right| \right) \wedge 2 \|W''\|_{\infty} \right\}  \, dr \right] \\
    &\le \left\|\frac{\partial f}{\partial s}\right\|_{\infty}  \Ee^{(y,s)}  \left[\left\{ \|W'''\|_{\infty} \left( \sup_{0 \le r \le h} |X_r| + h \|W'\|_{\infty} M_2 \right) \right\}\wedge 2 \|W''\|_{\infty} \right]\\
    &\xrightarrow[h\to 0^+]{} 0,
\end{align*}
uniformly for all  $(y,s) \in \R \times [-M_1,M_1]$. The convergence follows from the right continuity of $X_t$ and our assumption that $X_0 = 0$. We also have
$$
    \Ee^{(y,s)}  \left[ \left|\frac{1}{2 h} \frac{\partial^2 f}{\partial s^2} (Y_h,\xi) \left(\int_0^h W''(Y_r) \, dr \right)^2\right| \right]
    \le \left\|\frac{\partial^2 f}{\partial s^2}\right\|_{\infty} \frac{h}{2}\,
\|W''\|_{\infty}^2
    \xrightarrow[h\to 0^+]{} 0,
$$
uniformly for all  $(y,s) \in \R \times [-M_1,M_1]$. Because $Y_h$ is right-continuous it is easy to see that
$$
    \Ee^{(y,s)}\left[ \frac{1}{h} \frac{\partial f}{\partial s} (Y_h,s) \int_0^h W''(y) \, dr  \right]
    \xrightarrow[h\to 0^+]{}
    \frac{\partial f}{\partial s}(y,s) W''(y),
$$
uniformly for all  $(y,s) \in \R \times [-M_1,M_1]$. It follows that
$$
    \text{I} \xrightarrow[h\to 0^+]{} \frac{\partial f}{\partial s}(y,s) W''(y),
$$
uniformly for all  $(y,s) \in \R \times [-M_1,M_1]$.

Now let us consider $\text{II}$. We have
\begin{align*}
    \text{II}
    &= \frac{1}{h}\Ee^{(y,s)}[f(y + X_h,s) - f(y,s)] + \frac{1}{h}\Ee^{(y,s)}[f(Y_h,s) - f(y + X_h,s)] \\
    &= \text{II}_1 + \text{II}_2.
\end{align*}
It is well known that
$$
    \text{II}_1 \xrightarrow[h\to 0^+]{} -(-\Delta_y)^{\alpha/2} f(y,s),
$$
uniformly for all  $(y,s)$. We also have
\begin{align*}
    \text{II}_2
    &= \Ee^{(y,s)}\left[ \frac{1}{h} \frac{\partial f}{\partial y}(y + X_h,s) \int_0^h W'(Y_r) S_r \, dr+ \frac{1}{2 h} \frac{\partial^2 f}{\partial y^2}(\xi,s) \left(\int_0^h W'(Y_r) S_r \, dr \right)^2 \right] \\
    &= \Ee^{(y,s)}\Bigg[ \frac{1}{h} \frac{\partial f}{\partial y}(y + X_h,s) \left(\int_0^h W'(y) s \, dr + \int_0^h W'(Y_r) (S_r - s) \, dr \right.\\
    &\qquad\qquad\quad \mbox{} + \left. \int_0^h (W'(Y_r)-W'(y)) s \, dr\right)
    +\frac{1}{2 h} \frac{\partial^2 f}{\partial y^2}(\xi,s) \left(\int_0^h W'(Y_r) S_r \, dr     \right)^2 \Bigg],
\end{align*}
where $\xi$ is a point between $y + X_h$ and $Y_h$. Using similar arguments as above we obtain
$$
    \text{II}_2 \xrightarrow[h\to 0^+]{} \frac{\partial f}{\partial y}(y,s) W'(y) s,
$$
uniformly for all  $(y,s) \in \R \times [-M_1,M_1]$.

It follows that
$$
    \frac{T_h f(y,s) - f(y,s)}{h} 
    \xrightarrow[h\to 0^+]{} -(-\Delta_y)^{\alpha/2} f(y,s) + W'(y) s \,\frac{\partial f}{\partial y}(y,s) + W''(y)\, \frac{\partial f}{\partial s}(y,s),
$$
uniformly for all  $(y,s) \in \R \times [-M_1,M_1]$. This means that $f \in \calD(\calGY)$ and \eqref{GZS} holds.
\end{proof}

\begin{remark}
A weaker version of Lemma \ref{gen1} can be proved as follows. If we rewrite the SDE \eqref{sdemain} in the form
$$
    d\begin{pmatrix} Y_t \\ S_t\end{pmatrix}
    = \begin{pmatrix} 1 & W'(Y_t) S_t \\ 0 & W''(Y_t) \end{pmatrix} d\begin{pmatrix} X_t \\ t\end{pmatrix}
    = \Phi(Y_t,S_t)\, d\begin{pmatrix} X_t \\ t\end{pmatrix}
$$
and notice that $(X_t,t)^\top$ is a two-dimensional L\'evy process with characteristic exponent $\psi(\xi,\tau)=|\xi|^\alpha + i\tau$, we can use \cite[Theorem 3.5, Remark 3.6]{sch-sch} to deduce that $C_c^\infty(\real^2)\subset \calD(\calG^{(Y,S)})$. This argument
uses the fact
 that the SDE has only jumps in the direction of the $\alpha$-stable process, while it is local in the other direction. Theorem 3.1 of \cite{sch-sch} now applies and shows that $\calG^{(Y,S)}$ is a pseudo-differential operator $\calG^{Y,S} u (x,s) = (2\pi)^{-2}\int_{\real^2} p(x,s;\xi,\tau)\, \calF u(\xi,\tau)\, e^{ix\xi + is\tau}\,  d\xi \, d\tau $,
where $\calF$
 denotes the Fourier transform, with symbol
$$
    p(x,s; \xi,\tau)
    = \psi(\Phi(y,s)^\top (\xi,\tau)^\top)
    = |\xi|^\alpha + i\xi W'(x)s.
$$
A Fourier inversion argument now shows that \eqref{GZS} holds for $f \in C_c^\infty(\real^2)$ and by a standard closure argument we deduce from this that \eqref{GZS} also holds for $f\in C_0^2(\real^2)$.
\end{remark}

\bigskip
We say that $f \in C_0(\uni \times \R)$ if and only if for every $\varepsilon >0 $ there exists a compact set $K \subset \uni \times \R$ such that $|f(u)| < \varepsilon$ for $u \in K^c$.  Let us define the semigroup $\{T_t^\uni\}_{t \ge 0}$ of the process $(Z_t,S_t)$ by
\begin{equation}\label{definitionTtD}
    T_t^\uni f(z,s) = \Ee^{(z,s)}f(Z_t,S_t), \quad z \in \uni, \quad s \in \R,
\end{equation}
for $f$ belonging to $C_0(\uni \times \R)$.  Let $z = e^{iy}$, $y \in \R$. For future reference, we note the following consequences of Corollary \ref{wrap-mp},
\begin{equation}\label{Tttilde}
    T_t^\uni f(z,s)
    = \Ee^{(z,s)}f(Z_t,S_t)
    = \Ee^{(y,s)}f(e^{i Y_t},S_t)
    = \Ee^{(y,s)}\tilde{f}(Y_t,S_t)
    = T_t \tilde{f}(y,s),
\end{equation}
and
\begin{equation}\label{doubletilde}
    \widetilde{T_t^\uni f}(y,s) = T_t \tilde{f}(y,s).
\end{equation}

By $\Arg(z)$ we denote the argument of $z \in \C$ contained in $(-\pi,\pi]$. For $g \in C^2(\uni)$ let us put
\begin{equation}\label{Lz1}\begin{aligned}
    Lg(z)
    &=  \calA_{\alpha} \lim_{\eps \to 0^+} \int_{\uni \cap \{|\Arg(w/z)| > \eps\}} \frac{g(w) - g(z)}{|\Arg(w/z)|^{1 + \alpha}} \, dw \\
    &\qquad+\calA_{\alpha} \sum_{n \in \Z \setminus \{0\}} \int_{\uni} \frac{g(w) - g(z)}{|\Arg(w/z) + 2 n \pi|^{1 + \alpha}} \, dw,
\end{aligned}\end{equation}
where $\calA_\alpha$ is the constant appearing in \eqref{periodic} and $dw$ denotes the arc length measure on $\uni$; note that $\int_\uni \,dw = 2 \pi$.

 Let $\calG$ be
 the generator of the semigroup $\{T_t^\uni\}_{t \ge 0}$
and let $\calD(\calG)$ be
 its domain.

\begin{lemma}\label{gen2}
    We have $C_c^2(\uni \times \R) \subset \calD(\calG)$ and for $f \in C_c^2(\uni \times \R)$,
    \begin{equation*}
        \calG f(z,s)
        = L_z f(z,s) + V'(z) s f_z(z,s) + V''(z) f_s(z,s), \quad z \in \uni, \quad s \in \R.
    \end{equation*}
\end{lemma}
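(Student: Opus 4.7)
The plan is to reduce Lemma \ref{gen2} to Lemma \ref{gen1} by exploiting the conjugation identity \eqref{doubletilde}, $\widetilde{T_t^\uni f} = T_t\tilde f$. Given $f\in C_c^2(\uni\times\R)$, the lift $\tilde f(y,s)=f(e^{iy},s)$ is bounded, $C^2$, and $2\pi$-periodic in $y$, with $\supp\tilde f\subset\R\times[-N,N]$; in particular $\tilde f\in C_*^2(\R^2)$. By Lemma \ref{gen1},
\[
    \calGY\tilde f(y,s)=-(-\Delta_y)^{\alpha/2}\tilde f(y,s)+W'(y)s\,\tilde f_y(y,s)+W''(y)\tilde f_s(y,s),
\]
and the proof actually shows that $h^{-1}(T_h\tilde f-\tilde f)\to \calGY\tilde f$ uniformly on $\R\times[-M_1,M_1]$. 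Because $\tilde f$ is $2\pi$-periodic in $y$, the difference quotient is also $2\pi$-periodic in $y$, so the uniform convergence lifts to all of $\R\times\R$; via $z=e^{iy}$ this yields uniform convergence of $h^{-1}(T_h^\uni f-f)$ on $\uni\times\R$, giving both $f\in\calD(\calG)$ and the formula for $\calG f$ provided we can identify the three terms.

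The identification of the two local terms is built into the convention $f'(z)=(\tilde f)'(x)$, $f_s(z,s)=\tilde f_s(y,s)$, $V'(z)=W'(y)$ and $V''(z)=W''(y)$ with $z=e^{iy}$. For the nonlocal term I apply \eqref{periodic} to the $2\pi$-periodic map $u\mapsto\tilde f(u,s)$ and then substitute $w=e^{iu}$ on $\uni$. On $u-y\in(-\pi,\pi]$ one has $\Arg(w/z)=u-y$ and $du$ corresponds to the arc-length element $dw$; thus
\[
    \int_{\eps<|u-y|<\pi}\frac{\tilde f(u,s)-\tilde f(y,s)}{|u-y|^{1+\alpha}}\,du
    =\int_{\uni\cap\{|\Arg(w/z)|>\eps\}}\frac{f(w,s)-f(z,s)}{|\Arg(w/z)|^{1+\alpha}}\,dw,
\]
and termwise, for each $n\in\Z\setminus\{0\}$,
\[
    \int_{|u-y|<\pi}\frac{\tilde f(u,s)-\tilde f(y,s)}{|u-y+2n\pi|^{1+\alpha}}\,du
    =\int_{\uni}\frac{f(w,s)-f(z,s)}{|\Arg(w/z)+2n\pi|^{1+\alpha}}\,dw.
\]
Summing these contributions yields $-(-\Delta_y)^{\alpha/2}\tilde f(y,s)=L_z f(z,s)$, as required.

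The only point that deserves care is the passage from the uniform convergence on the strip $\R\times[-M_1,M_1]$ (which is all that the proof of Lemma \ref{gen1} literally delivers) to uniform convergence on $\uni\times\R$. This is handled by the same tail estimate used in Lemma \ref{gen1}: for $|s|>N+\|W''\|_\infty$ and $h\le 1$, the solution started at $(y,s)$ satisfies $|S_t|>N$ for all $t\le h$, hence $(Y_h,S_h)\notin\supp\tilde f$ almost surely, so $h^{-1}(T_h\tilde f-\tilde f)(y,s)=0$. Together with the $2\pi$-periodicity in $y$ this reduces the sup over $\R\times\R$ to the sup over the compact set $[0,2\pi]\times[-M_1,M_1]$ on which uniform convergence is already established. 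I expect this bookkeeping step to be the main (but entirely routine) obstacle.
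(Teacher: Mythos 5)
Your proposal is correct and follows essentially the same route as the paper: reduce to Lemma \ref{gen1} via the identity $\widetilde{T_t^\uni f}=T_t\tilde f$, note that $\tilde f\in C_*^2(\R^2)$ so the difference quotients converge uniformly (the paper's proof of Lemma \ref{gen1} already covers all of $\R^2$, since the quotient vanishes identically outside the strip $\R\times[-M_1,M_1]$), and then identify the limit with $L_zf+V'(z)sf_z+V''(z)f_s$ using \eqref{periodic} and the substitution $w=e^{iu}$. The extra bookkeeping you flag at the end is harmless but already built into Lemma \ref{gen1}.
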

\begin{proof}
Let $f \in C_c^2(\uni \times \R)$. Note that $\tilde{f} \in C_*^2(\Rt)$. We obtain from \eqref{GZS}, for  $z = e^{iy}$, $y,s \in \R$,
\begin{align}\notag
&\lim_{t \to 0^+} \frac{T_t^\uni f(z,s) - f(z,s)}{t}
= \lim_{t \to 0^+} \frac{T_t \tilde{f}(y,s) - \tilde{f}(y,s)}{t} \\
\label{generator1}
&\phantom{\lim_{t \to 0^+}}\qquad =  -(-\Delta)^{\alpha/2} \tilde{f}(y,s) + W'(y) s \tilde{f}_y(y,s) + W''(y) \tilde{f}_s(y,s).
\end{align}
By Lemma \ref{gen1} this limit exists uniformly in $z$ and $s$, i.e.\  $f \in \calD(\calG)$.

We get from \eqref{periodic}
\begin{equation}
\label{Lz3}
-(-\Delta_y)^{\alpha/2}\tilde{f}(y,s) = L_z f(z,s).
\end{equation}
Recall that we have
$W(y) = V(e^{iy}) $,
 $y \in \R$. Using our
definitions
 we get $V'(z) = W'(y)$, $V''(z) = W''(y)$ for $z = e^{iy}$, $y \in \R$. Hence \eqref{generator1} equals
$$
    L_z f(z,s) + V'(z) s f_z(z,s) + V''(z) f_s(z,s),
$$
which gives the assertion of the lemma.
\end{proof}

We will need the following auxiliary lemma.

\begin{lemma}
\label{L}
For any  $f \in C^2(\uni)$ we have
$$
\int_\uni Lf(z) \, dz = 0.
$$
\end{lemma}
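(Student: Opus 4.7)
The strategy is to reduce the claim to a one-dimensional calculation in which the $2\pi$-periodicity of $\tilde{f}(y) := f(e^{iy})$ kills the integral term by term. Using identity \eqref{Lz3} established in the proof of Lemma \ref{gen2} together with the arc-length parametrization $z = e^{iy}$, $y\in(-\pi,\pi]$, one has
$$
\int_\uni Lf(z)\,dz = -\int_{-\pi}^{\pi} (-\Delta_y)^{\alpha/2}\tilde{f}(y)\,dy,
$$
and the right-hand side expands, via \eqref{periodic}, into a principal-value local piece plus an absolutely convergent series of tail terms indexed by $n\in\Z\setminus\{0\}$.

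Next I would change variables $u := w-y$ in each inner integral, so that every contribution takes the form
$$
\calA_\alpha \int_{-\pi}^{\pi}\int_{|u|<\pi} \frac{\tilde{f}(y+u)-\tilde{f}(y)}{|u+2n\pi|^{1+\alpha}}\,du\,dy
$$
(with the $n=0$ term understood as a principal value), and then interchange the order of integration. For $n\neq 0$ the swap is immediate by absolute integrability and summability of the kernel on $\{|u|<\pi\}\times(-\pi,\pi)\times(\Z\setminus\{0\})$. For the singular $n=0$ term one first truncates at $|u|>\eps$, applies ordinary Fubini on the truncated region, and only afterwards lets $\eps\to 0^+$; the $C^2$-regularity of $\tilde{f}$ provides symmetric-increment cancellation which bounds the truncated integrand uniformly in $y$ by a multiple of $|u|^{1-\alpha}$, a function that is integrable near $0$ because $\alpha<2$, so the limit passes through the outer $y$-integral by dominated convergence.

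Once the integrations have been swapped, each contribution contains the inner integral
$$
\int_{-\pi}^{\pi}\bigl(\tilde{f}(y+u)-\tilde{f}(y)\bigr)\,dy,
$$
which vanishes for every fixed $u$ by $2\pi$-periodicity of $\tilde{f}$, so every piece of $\int_\uni Lf(z)\,dz$ is zero. The only delicate point is the bookkeeping around the principal value, since the kernel $|u|^{-1-\alpha}$ is not integrable at the origin; however, all the analytic content reduces to the symmetric cancellation coming from $f\in C^2(\uni)$, and once that is in place the proof is a clean Fubini argument.
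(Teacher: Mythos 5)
Your proof is correct, but it runs on a different engine than the paper's. The paper stays on the circle and exploits the \emph{symmetry} of the kernel: writing $\int_\uni Lf(z)\,dz$ as a double integral $\iint_{\uni\times\uni} (f(w)-f(z))\,k(z,w)\,dw\,dz$, it interchanges the roles of $z$ and $w$, uses $|\Arg(z/w)|=|\Arg(w/z)|$ (and $|\Arg(z/w)+2n\pi|=|\Arg(w/z)-2n\pi|$ for the tail terms) to conclude that each double integral equals its own negative and hence vanishes; see \eqref{intL} and \eqref{intL2}. You instead unwrap to the line via \eqref{Lz3} and \eqref{periodic}, use the \emph{translation invariance} of the kernel to substitute $u=w-y$, swap the order of integration, and kill each term with $\int_{-\pi}^{\pi}\bigl(\tilde f(y+u)-\tilde f(y)\bigr)\,dy=0$, i.e.\ with the translation invariance of arc length. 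Both arguments need exactly the same analytic care around the principal value --- a symmetric-increment (second-order Taylor) bound of order $|u|^{1-\alpha}$, integrable since $\alpha<2$, plus dominated convergence to interchange $\lim_{\eps\to0^+}$ with the outer integral; this is \eqref{intLeps} in the paper and your truncate-then-pass-to-the-limit step --- so neither route is shorter. The trade-off is in generality: the paper's antisymmetrization works for any symmetric jump kernel $k(z,w)=k(w,z)$ (it really shows $L$ is symmetric with respect to $dz$, hence the uniform measure is reversible), whereas your computation only uses that the kernel depends on $w-z$ and would therefore also prove invariance of the uniform measure for a non-symmetric translation-invariant L\'evy kernel, where the paper's swap would fail. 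For the rotation-invariant symmetric stable kernel at hand the two hypotheses coincide and both proofs go through.
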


\begin{proof}
Recall that $\Arg(z)$ denotes the argument of $z \in \C$ belonging to $(-\pi,\pi]$.
First we will
 show that
\begin{equation}\label{intL}
    \iint_{\uni\times \uni} \I_{\{w\::\:|\Arg(w/z)| > \eps\}}(w) \, \frac{f(w) - f(z)}{|\Arg(w/z)|^{1 + \alpha}} \, dw \, dz = 0.
\end{equation}
We
 interchange $z$ and $w$, use
Fubini's theorem
 and observe that $|\Arg(z/w)| = |\Arg(w/z)|$,
\begin{align*}
    \iint_{\uni\times \uni} &\I_{\{w\::\:|\Arg(w/z)| > \eps\}}(w) \, \frac{f(w) - f(z)}{|\Arg(w/z)|^{1 + \alpha}} \, dw \, dz\\
    &= \iint_{\uni\times \uni}  \I_{\{z\::\:|\Arg(z/w)| > \eps\}}(z)  \, \frac{f(z) - f(w)}{|\Arg(z/w)|^{1 + \alpha}} \, dz \, dw\\
    &= \iint_{\uni\times \uni}  \I_{\{z\::\:|\Arg(z/w)| > \eps\}}(z)  \, \frac{f(z) - f(w)}{|\Arg(z/w)|^{1 + \alpha}} \, dw \, dz\\
    &= -\iint_{\uni\times \uni} \I_{\{w\::\:|\Arg(w/z)| > \eps\}}(w) \, \frac{f(w) - f(z)}{|\Arg(w/z)|^{1 + \alpha}} \, dw \, dz,
\end{align*}
which proves \eqref{intL}.

By interchanging $z$ and $w$ we also get that
\begin{equation}\label{intL2}\begin{aligned}
\sum_{n \in \Z \setminus \{0\}} &\int_\uni \int_{\uni} \frac{f(w) - f(z)}{|\Arg(w/z) + 2 n \pi|^{1 + \alpha}} \, dw \, dz \\
& = \sum_{n \in \Z \setminus \{0\}} \int_\uni \int_{\uni} \frac{f(z) - f(w)}{|\Arg(z/w) + 2 n \pi|^{1 + \alpha}} \, dz \, dw.
\end{aligned}\end{equation}
Note that for $\Arg(w/z) \ne \pi$ we have $|\Arg(z/w) + 2 n \pi| = |\Arg(w/z) - 2 n \pi|$. Hence the expression in \eqref{intL2} equals $0$.

Set
$$
    L_{\eps}f(z) := \int_{\uni \cap \{|\Arg(w/z)| > \eps\}} \frac{f(w) - f(z)}{|\Arg(w/z)|^{1 + \alpha}} \, dw.
$$
What is left is to show that
\begin{equation}\label{intLeps}
    \int_\uni \lim_{\eps \to 0^+} L_{\eps}f(z) \, dz = \lim_{\eps \to 0^+} \int_\uni L_{\eps}f(z) \, dz.
\end{equation}
By the Taylor expansion we have for $f \in C^2(\uni)$
$$
    f(w) - f(z) = \Arg(w/z) f'(z) + \Arg^2(w/z) r(w,z), \quad w, \, z \in \uni,
$$
where  $|r(w,z)| \le c(f)$.  Hence,
\begin{align*}
    |L_{\eps}f(z)|
    &= \left|\int_{\uni \cap \{|\Arg(w/z)| > \eps\}} r(w,z) \Arg^{1 - \alpha}(w/z) \, dw\right| \\
    &\le c(f) \int_\uni |\Arg^{1 - \alpha}(w/z)| \, dw = c(f,\alpha).
\end{align*}
Therefore, we get \eqref{intLeps} by the bounded convergence theorem.
\end{proof}

We will
 identify the stationary measure for $(Z_t,S_t)$.
\begin{proposition}\label{statmeasure}
For $z \in \uni$ and $s \in \R$ let
$$
    \rho_1(z) \equiv \frac{1}{2 \pi},
    \quad \rho_2(s) = \frac{1}{\sqrt{2 \pi}} \, e^{-s^2/2},
    \quad \pi(dz,ds) = \rho_1(z) \rho_2(s) \, dz \, ds.
$$
Then for any  $f \in C_c^2(\uni \times \R)$ we have
$$
\int_\uni \int_\R \calG f(z,s) \, \pi(dz,ds) = 0.
$$
\end{proposition}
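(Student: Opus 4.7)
The plan is to split the integral according to the three terms of $\calG f$ given by Lemma \ref{gen2} and handle each separately, exploiting in an essential way the identity $\rho_2'(s)=-s\rho_2(s)$, which is the source of the Gaussianity.

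\smallskip\noindent\textbf{Part A (jump term).} I would apply Fubini (justified because $f\in C_c^2(\uni\times\R)$ gives $L_zf(z,s)$ bounded in $z$ and compactly supported in $s$) to obtain
$$
\int_\uni\int_\R L_z f(z,s)\,\pi(dz,ds)
= \frac{1}{(2\pi)^{3/2}}\int_\R e^{-s^2/2}\left[\int_\uni L_z f(z,s)\,dz\right]ds.
$$
For each fixed $s$, the map $z\mapsto f(z,s)$ lies in $C^2(\uni)$, so Lemma \ref{L} gives that the inner integral vanishes. Hence Part A contributes $0$.

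\smallskip\noindent\textbf{Part B (drift terms).} The remaining two terms are
$$
\text{II} := \int_\uni\int_\R V'(z)s\,f_z(z,s)\,\pi(dz,ds),\qquad
\text{III}:= \int_\uni\int_\R V''(z) f_s(z,s)\,\pi(dz,ds),
$$
and I plan to show $\text{II}+\text{III}=0$ by integrating by parts once in each. On $\uni$ there is no boundary, so the product rule $(V'f)_z=V''f+V'f_z$ integrated around $\uni$ yields $\int_\uni V'(z)f_z(z,s)\,dz=-\int_\uni V''(z)f(z,s)\,dz$; using Fubini this converts
$$
\text{II}= -\frac{1}{(2\pi)^{3/2}}\int_\uni V''(z)\left[\int_\R s\,e^{-s^2/2}f(z,s)\,ds\right]dz.
$$
On $\R$, the compact support of $f$ kills boundary terms, so integration by parts in $s$ together with $(e^{-s^2/2})' = -s\,e^{-s^2/2}$ gives $\int_\R f_s(z,s)e^{-s^2/2}\,ds=\int_\R s\,e^{-s^2/2}f(z,s)\,ds$, and therefore
$$
\text{III}= \frac{1}{(2\pi)^{3/2}}\int_\uni V''(z)\left[\int_\R s\,e^{-s^2/2}f(z,s)\,ds\right]dz.
$$
These two expressions are negatives of each other, so $\text{II}+\text{III}=0$.

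\smallskip\noindent\textbf{Summary and difficulty.} Combining Parts A and B proves the proposition. There is no real obstacle: the proof is a direct computation once the two integrations by parts (one on $\uni$ using periodicity, one on $\R$ using compact support) are performed, the decisive point being that the logarithmic derivative of $\rho_2$ is exactly $-s$, which synchronises the $V''$-contributions from the inert-drift term and from the jump-compensating gradient term.
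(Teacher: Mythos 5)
Your proof is correct and follows essentially the same route as the paper: both split $\calG f$ via Lemma \ref{gen2}, dispose of the jump term with Lemma \ref{L}, and cancel the two drift contributions by one integration by parts on $\uni$ (periodicity) and one on $\R$ (compact support), using $\rho_2'(s)=-s\rho_2(s)$. Nothing further is needed.
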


\begin{proof} We have
\begin{align*}
    \int_\uni \int_\R &\calG f(z,s) \, \pi(dz,ds)\\
    &= \frac{1}{2 \pi} \int_\uni \int_\R \big(L_z f(z,s) + V'(z) s f_z(z,s) + V''(z) f_s(z,s) \big) \rho_2(s) \, ds \, dz.
\end{align*}
Integrating by parts, we see that this is equal to
\begin{gather*}
    \frac{1}{2 \pi} \int_\uni \int_R L_z f(z,s) \rho_2(s) \, ds \, dz
    - \frac{1}{2 \pi} \int_\uni \int_R V''(z) s f(z,s) \rho_2(s) \, ds \, dz
    \\
    - \frac{1}{2 \pi} \int_\uni \int_R V''(z) f(z,s) {\rho_2}'(s) \, ds \, dz
    = \text{I} + \text{II} + \text{III}.
\end{gather*}
Since ${\rho_2}'(s) = -s \rho_2(s)$ we
find that
 $\text{II} + \text{III} = 0$, while $\text{I} = 0$ by Lemma \ref{L}. The claim follows.
\end{proof}

\begin{proposition}\label{TtD}
For any $t \ge 0$ we have
$$
    T_t^\uni: C_c^2(\uni \times \R) \to C_c^2(\uni \times \R).
$$
\end{proposition}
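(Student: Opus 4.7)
The plan is to pull the problem back to the semigroup $T_t$ on $\R^2$ via the identity $T_t^\uni f(z,s) = T_t\tilde f(y,s)$ (for $z=e^{iy}$) from \eqref{Tttilde}, and then to verify three things about $T_t\tilde f$: it is $C_b^2$ in $(y,s)$, it is $2\pi$-periodic in $y$, and it has compact support in $s$. Fix $f\in C_c^2(\uni\times\R)$ and write $\supp f\subset \uni\times[-N,N]$. The associated function $\tilde f(y,s)=f(e^{iy},s)$ lies in $C_b^2(\R^2)$, is $2\pi$-periodic in $y$, and is supported in $\R\times[-N,N]$.

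For smoothness, I would invoke Theorem \ref{Tt}, which asserts that $T_t$ maps $C_b^2(\R^2)$ into itself. Applied to $\tilde f\in C_b^2(\R^2)$, this yields $T_t\tilde f\in C_b^2(\R^2)$. Since smoothness on $\uni\times\R$ was defined in \eqref{ftilde2} and the paragraphs following it via smoothness of the lifted function on $\R^2$, descending $T_t\tilde f$ to $\uni\times\R$ produces a $C^2$ function in the sense required, provided the descent is well-defined, i.e.\ provided $T_t\tilde f$ is $2\pi$-periodic in $y$.

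To establish periodicity, I would apply Lemma \ref{sol-periodic}: for any Borel $A,B\subset\R$,
\begin{align*}
    \Ee^{(y+2\pi,s)}\tilde f(Y_t,S_t)
    &=\int \tilde f(a,b)\,\Pp^{(y+2\pi,s)}(Y_t\in da,\,S_t\in db)\\
    &=\int \tilde f(a+2\pi,b)\,\Pp^{(y,s)}(Y_t\in da,\,S_t\in db)
    =\Ee^{(y,s)}\tilde f(Y_t,S_t),
\end{align*}
using the $2\pi$-periodicity of $\tilde f$ in its first argument. Hence $T_t\tilde f(y+2\pi,s)=T_t\tilde f(y,s)$, and $T_t^\uni f$ is a well-defined $C_b^2$ function on $\uni\times\R$.

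Finally, for compact support, I would use the deterministic bound $|S_r-s|=|\int_0^r W''(Y_u)\,du|\leq t\,\|W''\|_\infty$ for $r\in[0,t]$. Thus if $|s|>N+t\,\|W''\|_\infty$, then $|S_t|>N$ almost surely, so $\tilde f(Y_t,S_t)=0$ and consequently $T_t\tilde f(y,s)=0$. Combined with the periodicity in $y$, this shows that $T_t^\uni f$ is supported in the compact set $\uni\times[-N-t\|W''\|_\infty,N+t\|W''\|_\infty]$, completing the argument. The only substantial input is Theorem \ref{Tt}; the rest is a bookkeeping exercise reducing the claim on the wrapped process to properties already in hand. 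The main obstacle, then, is entirely absorbed into Theorem \ref{Tt}, which in turn rests on the pathwise smoothness of the stochastic flow established in Proposition \ref{path}.
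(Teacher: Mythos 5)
Your overall architecture matches the paper's: pull back to the semigroup $T_t$ on $\R^2$ via \eqref{Tttilde}/\eqref{doubletilde}, get $C^2$ regularity from the flow-smoothness machinery, and use periodicity plus confinement of $S$ to a strip to descend to $\uni\times\R$. The periodicity and support arguments are fine (periodicity is already packaged in Corollary \ref{wrap-mp} and \eqref{doubletilde}, but your direct derivation from Lemma \ref{sol-periodic} is correct).

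However, there is a genuine gap in the central step. You invoke Theorem \ref{Tt} as if it asserted unconditionally that $T_t$ maps $C_b^2(\R^2)$ into itself, and apply it to $\tilde f\in C_b^2(\R^2)$. Theorem \ref{Tt} does not say this: its hypotheses require a set $K$ containing $\supp(T_tf)$ for all $t\le t_0$ and a convex set $M$ containing the flow started from $K_3$ on which $\|V\|_{\infty,M}$ and $\|D^{(j)}V\|_{\infty,M}$, $j=1,2,3$, are finite. The unconditional version (Remark \ref{remTt}) needs $\|V\|_{(3)}<\infty$ globally, and that fails here: writing \eqref{sdemain} in the form \eqref{newsde}, the drift component $V_1(y_1,y_2)=W'(y_1)y_2$ is unbounded on $\R^2$. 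This is exactly the difficulty the paper flags before Theorem \ref{Tt} and resolves in Proposition \ref{coreTt}: one must \emph{first} use the deterministic bound $|S_t-s|\le t_0\|W''\|_\infty$ (which you do derive, but only afterwards, for the compact-support bookkeeping) to show that $\supp(T_tf)$ stays in a horizontal strip $K$ and the flow from $K_3$ stays in a slightly larger strip $M$; on such an $M$ the periodicity of $W$ gives $\|V\|_{(3),M}<\infty$, and only then does Theorem \ref{Tt} apply. So the ingredients of the repair are already in your write-up, but the logical order is wrong and the theorem is misquoted: the confinement of $S$ is not an afterthought about supports, it is the precondition that makes the smoothness theorem applicable at all. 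Reordered this way, your argument becomes the paper's proof (Proposition \ref{coreTt} followed by the two-line descent to the circle).
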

The proof of this proposition is quite difficult. It is deferred to the next section in which we prove this result in much greater generality for solutions of SDEs driven by L\'evy processes.

\begin{theorem}\label{th:m3.1}
Let
\begin{align}\label{5.5.5}
    \pi(dz,ds) = \frac{1}{(2 \pi)^{3/2}}\, e^{-s^2/2} \, dz \, ds, \quad z \in \uni, \quad s \in \R.
\end{align}
Then $\pi$ is a
stationary distribution of the process $(Z_t,S_t)$.
\end{theorem}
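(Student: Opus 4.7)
The plan is to use the standard semigroup criterion: if the generator $\calG$ satisfies $\int \calG f\, d\pi = 0$ for all $f$ in a core, and if that core is invariant under the semigroup, then $\pi$ is stationary. Both inputs are already available: Proposition \ref{statmeasure} provides the integration-by-parts identity on $C_c^2(\uni\times\R)$, and Proposition \ref{TtD} provides invariance of $C_c^2(\uni\times\R)$ under $T_t^\uni$.

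Fix $f\in C_c^2(\uni\times\R)$ and consider
\[
    u(t) := \int_{\uni}\int_{\R} T_t^\uni f(z,s)\,\pi(dz,ds), \qquad t\geq 0.
\]
By Proposition \ref{TtD}, $T_t^\uni f \in C_c^2(\uni\times\R)$, and by Lemma \ref{gen2} this function lies in $\calD(\calG)$. Hence the strong right derivative $\frac{d}{dt^+}T_t^\uni f = \calG T_t^\uni f$ exists in the sup-norm of $C_0(\uni\times\R)$. Since $\pi$ is a probability measure, integration against $\pi$ is sup-norm continuous, so I may pass the derivative through the integral to get
\[
    \frac{d^+}{dt} u(t) = \int_{\uni}\int_{\R} \calG (T_t^\uni f)(z,s)\,\pi(dz,ds).
\]
Applying Proposition \ref{statmeasure} to the test function $T_t^\uni f \in C_c^2(\uni\times\R)$ shows that this derivative is zero for every $t\geq 0$. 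A right-continuous function on $[0,\infty)$ with vanishing right derivative is constant, so $u(t)=u(0)=\int f\,d\pi$ for all $t\geq 0$.

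This proves $\int T_t^\uni f\,d\pi = \int f\,d\pi$ for every $f\in C_c^2(\uni\times\R)$. To upgrade to stationarity I extend this identity to all bounded Borel $f$. First, $C_c^2(\uni\times\R)$ is dense in $C_0(\uni\times\R)$ in the sup-norm; since $\|T_t^\uni g\|_\infty \leq \|g\|_\infty$ and $\pi$ is a probability measure, dominated convergence yields the identity for all $g\in C_0(\uni\times\R)$. A standard monotone class / bounded convergence argument then extends it to all bounded Borel functions, and in particular to indicators of Borel sets, giving $\pi T_t^\uni = \pi$.

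The main obstacle is justifying the differentiation under the integral sign in the middle step: it is essential that $T_t^\uni f$ stays in the domain $\calD(\calG)$, which is precisely the content of Proposition \ref{TtD} (the hard result, proved separately in the next section). Everything else—Lemma \ref{gen2}, Proposition \ref{statmeasure}, the final density/monotone-class extension—is by now routine.
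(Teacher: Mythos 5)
Your proof is correct in substance but takes a genuinely different route from the paper. The paper invokes the Ethier--Kurtz martingale-problem machinery: it first shows (via Hille--Yosida and \cite[Theorem 4.4.1]{EK}) that the martingale problem for $\calG$ is well posed, uses Proposition \ref{TtD} together with \cite[Proposition 1.3.3]{EK} to establish that $C_c^2(\uni\times\R)$ is a core for $\calG$, and then cites \cite[Proposition 4.9.2]{EK} to convert the identity $\int \calG f\,d\pi=0$ of Proposition \ref{statmeasure} into stationarity. You instead unpack that last citation into a direct semigroup computation, $\frac{d^+}{dt}\int T_t^\uni f\,d\pi=\int\calG(T_t^\uni f)\,d\pi=0$, using Proposition \ref{TtD} at exactly the point where the paper uses it for the core property, namely to guarantee that $T_t^\uni f$ remains a legitimate test function for Proposition \ref{statmeasure}. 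Both arguments rest on the same two inputs (Propositions \ref{statmeasure} and \ref{TtD}); yours is more self-contained, avoiding the well-posedness of the martingale problem altogether, at the modest cost of carrying out the extension from $C_c^2(\uni\times\R)$ to indicators of Borel sets by hand (which your density plus monotone-class step does handle).

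One small repair is needed. The lemma you quote --- a right-continuous function on $[0,\infty)$ with vanishing right derivative is constant --- is false as stated: the unit step $u(t)=\I_{[1,\infty)}(t)$ is right-continuous with right derivative zero everywhere but is not constant. The correct statement requires $u$ to be continuous. Fortunately your $u$ is continuous: for $f\in\calD(\calG)$ one has $\|T_h^\uni f-f\|_\infty\to 0$ as $h\to 0^+$, hence $\|T_{t\pm h}^\uni f-T_t^\uni f\|_\infty\le\|T_h^\uni f-f\|_\infty\to 0$, and integration against the probability measure $\pi$ preserves this. With continuity in hand, the vanishing of the right Dini derivative does force $u$ to be constant, and the rest of your argument goes through.
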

\begin{proof}
Let $(Y_t,S_t)$ be a Markov process satisfying the SDE \eqref{sdemain} and $(Z_t,S_t)=(e^{i Y_t},S_t)$. Recall that $\{T_t^\uni\}_{t \ge 0}$ is the semigroup on $C_0(\uni \times \R)$ defined by \eqref{definitionTtD} and $\calG$ is its generator.  Let $\calP(\R \times \R)$ and $\calP(\uni \times \R)$ denote the sets of all probability measures on $\R \times \R$ and $\uni  \times \R$ respectively. In this proof, for any $\tilde\mu \in \calP(\uni \times \R)$ we define $\mu \in \calP(\R \times \R)$ by $\mu([0,2\pi)\times \R) = 1$ and $\mu(A \times B) = \tilde\mu(e^{iA} \times B)$ for Borel sets $A \subset [0,2\pi)$, $B \subset \R$.

Consider any $\tilde\mu \in \calP(\uni \times \R)$ and the corresponding $\mu \in \calP(\R \times \R)$.

For this $\mu$ there exists a Markov process $(Y_t,S_t)$ given by \eqref{sdemain} such that $(Y_0,S_0)$ has the distribution $\mu$.
It follows that for any $\wt\mu \in \calP(\uni \times \R)$ there exists a Markov process $(Z_t,S_t)$ given by \eqref{sdemain} and $Z_t = e^{i Y_t}$ such that $(Z_0,S_0)$ has the distribution $\wt\mu$. By Proposition 4.1.7 \cite{EK}, $(Z_t,S_t)$ is a solution of the martingale problem for $(\calG,\wt\mu)$. The Hille-Yosida theorem shows that the assumptions  of Theorem 4.4.1 \cite{EK} are satisfied if we take $A = A' = \calG$. Thus Theorem 4.4.1 \cite{EK} implies that for any $\wt\mu \in \calP(\uni \times \R)$, uniqueness holds for the martingale problem for $(\calG,\wt\mu)$. Hence the martingale problem for $\calG$ is well posed.

Note that $C_c^2(\uni \times \R)$ is dense in $C_0(\uni \times \R)$, that is, in the set on which the semigroup $\{T_t^\uni\}_{t \ge 0}$ is defined. It follows from Proposition \ref{TtD} and Proposition 1.3.3 from 
\cite{EK} that $C_c^2(\uni \times \R)$ is a core for $\calG$. Now using Proposition \ref{statmeasure} and Proposition 4.9.2 from \cite{EK} we get that $\pi$ is a stationary measure for $\calG$. This means that $(Z_t,S_t)$ has a stationary distribution $\pi$.
\end{proof}

\begin{theorem}\label{uniq1}
The measure $\pi$ defined in \eqref{5.5.5} is the unique
stationary distribution of the process $(Z_t,S_t)$.
\end{theorem}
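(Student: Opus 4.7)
The plan is to establish uniqueness via Doob's theorem, which reduces the problem to verifying two properties of the Markov semigroup $\{T_t^\uni\}_{t\ge 0}$: topological irreducibility and a strong Feller-type property. Combined with the existence of $\pi$ from Theorem~\ref{th:m3.1}, these imply that $\pi$ is the only stationary probability measure.

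For topological irreducibility I would show that for every $(z,s)\in\uni\times\R$ and every non-empty open $A\subset\uni\times\R$ there exists $t>0$ with $\Pp^{(z,s)}((Z_t,S_t)\in A)>0$. Irreducibility in the $Z$-direction is straightforward: the stable process $X_t$ has a strictly positive transition density on $\R$, so a single stable jump can place $Z_t=e^{iY_t}$ near any target point of $\uni$. The $S$-direction is more delicate, but since $V$ is $C^5$ and non-constant on $\uni$ and
\[
    \int_\uni V''(z)\,dz = W'(\pi)-W'(-\pi)=0
\]
by periodicity of $W'$, the continuous function $V''$ must take both strictly positive and strictly negative values. By arranging for $Y_r$ to dwell in a region where $W''$ has a chosen sign for a prescribed duration, the integral $S_t-s=\int_0^t W''(Y_r)\,dr$ can be steered to any target real number; combining this with a final stable jump to place $Z_t$ near a target point yields the claim.

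The strong Feller property—$T_t^\uni$ maps bounded Borel functions to continuous functions—should come from a transition-density estimate relying on the pseudo-differential structure of $\calGY$ noted in the remark after Lemma~\ref{gen1} and the smoothness of the stochastic flow from Proposition~\ref{path}. I expect the main obstacle to lie here: the $S$-component evolves deterministically given the path of $Y$, so the kernel $P_t^\uni((z,s),\cdot)$ is not obviously absolutely continuous on $\uni\times\R$, and a naive strong Feller argument may fail. Two reasonable workarounds are (i) to prove an asymptotic or weak strong Feller property in the sense of Hairer–Mattingly, or (ii) to construct a coupling of two copies of $(Z,S)$ started at different initial conditions that succeeds with positive probability, yielding a minorization condition and hence positive Harris recurrence; either route replaces Doob's theorem and delivers uniqueness directly, at which point the combination with Theorem~\ref{th:m3.1} completes the proof.
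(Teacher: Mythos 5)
Your instinct to abandon Doob's theorem is sound --- the strong Feller property is indeed doubtful here, precisely because $S_t-S_0=\int_0^t W''(Y_r)\,dr$ is a deterministic functional of the $Y$-path, so the law of $(Z_t,S_t)$ at a \emph{fixed} time $t$ has no obvious absolutely continuous component in the $s$-variable. Your fallback route (ii) is in fact the strategy the paper follows: produce, from an arbitrary stationary initial law, a stopping time at which the law of the process dominates a fixed multiple of Lebesgue measure on a fixed set, and conclude. Your irreducibility discussion (the support theorem for the stable noise, the fact that $V''$ integrates to zero over $\uni$ and hence takes both signs, steering $S$ by dwelling where $W''$ has a prescribed sign) also matches the paper's Steps 1--2. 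But the proposal stops exactly where the real work begins: you assert that a coupling ``yielding a minorization condition'' can be constructed, without saying how, and this is the genuinely nontrivial point for this degenerate system. The paper's device is to split $X=J+\wt X$ into the compound Poisson process $J$ of jumps larger than $\eps_0$ and the remainder $\wt X$, use the support theorem to force $\wt X$ to track the deterministic path \eqref{6.11.1} so that $\wt Y$ stays near a point where $V''<b_2<0$ and $\wt S$ sweeps \emph{monotonically} from above $\eps_1/2$ to below $-\eps_2$, and then evaluate at the time $\tau$ of the first large jump of $J$. Conditionally on exactly one such jump in $[t_4,t_6]$, the jump time is uniform and the jump size has density bounded below, so $Z_\tau$ acquires a density bounded below on $\uni$ while the occupation-time bound (the sweeping $\wt S$ spends at least $(s_2-s_1)/\|W''\|_\infty$ units of time in each subinterval $(s_1,s_2)$) gives $S_\tau$ a density bounded below on $(-\eps_2,\eps_1/2)$. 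None of this is recoverable from topological irreducibility alone.

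A second, smaller gap is the passage from the minorization to uniqueness. Harris recurrence is not what the paper invokes; instead it argues by contradiction: two distinct stationary measures $\pi\ne\wh\pi$ may be taken mutually singular, so $\pi(A)>0=\wh\pi(A)$ for some $A$; the ergodic theorem makes the time averages of $\I_A$ converge a.s.\ to $\pi(A)$ under $\Pp^\pi$ and to $0$ under $\Pp^{\wh\pi}$; but the laws of the process at the respective stopping times have mutually absolutely continuous components, which is incompatible with these two almost sure statements. If you intend to complete your route (ii), you should either supply this argument or verify the hypotheses of a Harris-type theorem explicitly; in either case the construction of the absolutely continuous component via the single large jump at a random time is the step your proposal is missing.
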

\begin{proof}

Suppose that for some \cadlag processes $X^1$ and $X^2$, processes  $(Y_t^1,S_t^1)$ and $(Y_t^2,S_t^2)$ satisfy
\begin{align}
Y^1_t &=  y + X^1_t + \int_0^t W'(Y^1_r) S^1_r \, dr, \label{5.6.10}\\
S^1_t &= s + \int_0^t W''(Y^1_r) \, dr, \label{5.6.11}\\
Y^2_t &=  y + X^2_t + \int_0^t W'(Y^2_r) S^2_r \, dr, \label{5.6.12}\\
S^2_t &= s + \int_0^t W''(Y^2_r) \, dr. \label{5.6.13}
\end{align}
Then
\begin{align}\label{5.6.5}
|S^1_t - S^2_t| \leq \int_0^t |W''(Y^1_r)-W''(Y^2_r)| \, dr
\leq \|W^{(3)}\|_\infty \int_0^t |Y^1_r-Y^2_r| \,dr,
\end{align}
and, therefore, for $t\leq 1$,
\begin{align*}
&|Y^1_t - Y^2_t| \leq |X^1_t - X^2_t| + \int_0^t |W'(Y^1_r)S^1_r-W'(Y^2_r)S^2_r| \, dr\\
&\leq |X^1_t - X^2_t| + \int_0^t |W'(Y^1_r)(S^1_r - S^2_r)| \, dr
+ \int_0^t |(W'(Y^1_r)-W'(Y^2_r))S^2_r| \, dr\\
&\leq |X^1_t - X^2_t|
+ \|W'\|_\infty \sup_{0\leq r \leq t} |S^1_r-S^2_r| \, t
+ \|W''\|_\infty
\sup_{0\leq r \leq t} |S^2_r| \int_0^t |Y^1_r-Y^2_r| \,dr\\
&\leq |X^1_t - X^2_t|
+ \|W'\|_\infty \,t\, \|W^{(3)}\|_\infty \int_0^t |Y^1_r-Y^2_r| \,dr   \\
&\quad \quad + \|W''\|_\infty \left( |s| + \|W''\|_\infty t \right)
\int_0^t |Y^1_r-Y^2_r| \,dr\\
&\leq |X^1_t - X^2_t|
+ (c_1 t + c_2 |s|) \int_0^t |Y^1_r-Y^2_r| \,dr \\
&\leq |X^1_t - X^2_t|
+ (c_1  + c_2 |s|) \int_0^t |Y^1_r-Y^2_r| \,dr .
\end{align*}
By Gronwall's inequality,
\begin{align*}
\sup_{0\leq r \leq t} |Y^1_r-Y^2_r|
&\leq \sup_{0\leq r \leq t}|X^1_r - X^2_r|
+ \int_0^t |X^1_r - X^2_r| (c_1  + c_2 |s|)
\exp\big\{(c_1  + c_2 |s|)\,t\big\}\, dr\\
&\leq \sup_{0\leq r \leq t}|X^1_r - X^2_r|
\big(1 +   t (c_1  + c_2 |s|)
\exp\big\{(c_1  + c_2 |s|)t\big\} \big).
\end{align*}
For $t =1$, the inequality becomes
\begin{align}\label{5.6.14}
\sup_{0\leq r \leq 1} |Y^1_r-Y^2_r|
\leq \sup_{0\leq r \leq 1}|X^1_r - X^2_r|
\big(1 +   (c_1  + c_2 |s|)
\exp\big\{(c_1  + c_2 |s|)\big\} \big).
\end{align}

We substitute \eqref{5.6.11} into \eqref{5.6.10} and rearrange terms to obtain,
\begin{align*}
X^1_t &=  -y + Y^1_t -
\int_0^t W'(Y^1_r)
\left(s + \int_0^r W''(Y^1_u) \, du\right) \, dr.
\end{align*}

We substitute the (non-random) number $y$ for $Y^1_t$ in the above formula to obtain
\begin{align}\label{6.11.1}
X^1_t &=  -y + y -
\int_0^t W'(y)
\left(s + \int_0^r W''(y) \, du\right) \, dr\\
 &= -W'(y) (t s + t^2 W''(y)/2).
\nonumber
\end{align}
From now on, $X^1$ will denote the process defined
in \eqref{6.11.1}. It is easy to see that $X^1_t$ is well defined for all $t\geq 0$. If we substitute this $X^1$ into \eqref{5.6.10}--\eqref{5.6.11}  then $Y_t \equiv y$.

It follows from  \cite[Theorem II, p.~9]{Sim}, that every continuous function is in the support of the distribution of the symmetric $\alpha$-stable L\'evy process on $\R$.
We will briefly outline how to derive the last claim from the much more general result in \cite[Theorem II, p.~9]{Sim}. One should take $a(\,\cdot\,) \equiv 0$ and $b(\,\cdot\, , z) \equiv z$. Note that the ``skeleton'' functions in \cite[(5), p.~9]{Sim} can have jumps at any times and of any sizes so the closure of the collection of all such functions in the Skorokhod topology contains the set of all continuous functions. Standard arguments then show that every continuous function is in the support of the distribution of the stable process also in the topology of uniform convergence on compact time intervals.
We see that if $X^1$ is the continuous function defined in \eqref{6.11.1} and
$X^2_t$ is a stable process as in \eqref{sdemain} then for every $\eps>0$ there exists $\delta>0$ such that,

\begin{align*}
\Pp\left(\sup_{0\leq r \leq 1}|X^1_r - X^2_r| \leq \eps \right)
\geq \delta.
\end{align*}
This and \eqref{5.6.14} show that for any $y,s\in \R$ and
$\eps>0$ there exists $\delta>0$ such that,
\begin{align*}
\Pp^{y,s}\left(\sup_{0\leq r \leq 1}|X^1_r - X^2_r| \leq \eps,
\sup_{0\leq r \leq 1}|Y^2_r - y| \leq \eps \right)
\geq \delta.
\end{align*}
Note that $S$ can change by at most $\|W''\|_\infty$ on any
interval of length 1. This, the Markov property and induction
show that for any $\eps>0$ there exist $\delta_k>0$, $k\geq 1$,
such that,
\begin{align*}
\Pp^{y,s}\left(\sup_{k\leq r \leq k+1}|X^1_r - X^2_r| \leq 2^{-k}\eps ,
\sup_{k\leq r \leq k+1}|Y^2_r - Y^2_{k}| \leq 2^{-k}\eps \right)
\geq \delta_k.
\end{align*}
where
$X^1$ is defined in \eqref{6.11.1}.
This
implies that for any $\tau < \infty$, $y,s\in \R$ and $\eps>0$
there exists $\delta'>0$ such that,
\begin{align}\label{5.6.20}
\Pp^{y,s}\left(\sup_{0\leq r \leq \tau}|X^1_r - X^2_r| \leq 2\eps,
\sup_{0\leq r \leq \tau}|Y^2_r - y| \leq 2\eps \right)
\geq \delta'.
\end{align}

\noindent
\emph{Step 2}. Recall that $V$ is not identically
constant. This and the fact  that $V \in C^5$ easily imply that
$W''$ is strictly positive on some interval and it is strictly
negative on some other interval. We fix some $a_1, a_2\in(-\pi,\pi)$,
$b_1>0$, $b_2<0$ and $\eps_0\in(0,\pi/100)$, such that
$V''( z )>b_1$ for $z\in \uni$, $\Arg(z) \in [a_1-4\eps_0, a_1 +
4\eps_0]$, and $V''( z)<b_2$ for $z\in \uni$, $\Arg(z) \in
[a_2-4\eps_0, a_2 + 4\eps_0]$.

Suppose that there exist two stationary probability
distributions $\pi$ and $\wh\pi$ for $(Z,S)$. Let $((Z_t,
S_t))_{t\geq0}$ and $((\wh Z_t, \wh S_t))_{t\geq0}$ be
processes with $(Z_0,S_0)$ and $(\wh Z_0, \wh S_0)$ distributed according to
$\pi$ and $\wh\pi$,  respectively. The transition probabilities for
these processes are the same as for the processes defined by
\eqref{sdemain} and \eqref{Zdef}. Let $X$ denote the driving stable L\'evy process for $Z$.

Let $A$ be an open set such that $W''(y) >c >0$ for all $y\in A$. In view of the relationship between $V$ and $W$, we can assume that $A$ is periodic, that is, $y\in A$ if and only if $y+2\pi \in A$. It follows easily from \eqref{sdemain} that there exist $q_1>0$ and $s_1< \infty$ such that for any $(Y_0,S_0)$, the process $Y$ enters $A$ at some random time $T_1\leq s_1 $ with probability greater than $q_1$. Since $Y$ is right continuous, if $Y_{T_1} \in A$ then $Y_t$ stays in $A$ for all $t$ in some interval $(T_1, T_2)$, with $T_2 \leq 2 s_1$. Then \eqref{sdemain} implies that $S_t \ne 0$ for some $t\in (T_1, T_2)$. A repeated application of the Markov property at  the  \emph{}times $2s_1, 4s_1, 6s_1, \dots$ shows that the probability that $S_t = 0$ for all $t \leq 2k s_1$ is less than $(1-q_1)^k$. Letting $k\to \infty$, we see that
$S_t\ne0$ for some $t>0$, a.s.

Suppose without loss of generality that there exist $\eps_1>0$, $t_2>0$ and $p_1>0$ such that $\Pp^\pi(S_{t_2}>\eps_1) > p_1$. Let $F_1 = \{S_{t_2}>\eps_1\}$ and $t_3 = \eps_1/(2\|W''\|_\infty)$. It is easy to see that for some $p_2>0$,
$$
    \Pp^{\pi}  \left(\exists\, t\in [t_2, t_2+t_3]\::\: \Arg(Z_t)\in [a_2-\eps_0, a_2 + \eps_0] \:\big| \: F_1 \right) > p_2.
$$

This implies that there exist $\eps_1>0$, $t_2>0$, $t_4 \in [t_2, t_2+t_3]$ and $p_3>0$ such that,
\begin{align*}
\Pp^\pi(S_{t_2}>\eps_1, \Arg(Z_{t_4}) \in [a_2-2\eps_0, a_2 + 2\eps_0]) > p_3.
\end{align*}
Note that $|S_{t_4} - S_{t_2}| \leq \|W''\|_\infty t_3 < \eps_1/2$. Hence,
\begin{align*}
\Pp^\pi(S_{t_4}>\eps_1/2, \Arg(Z_{t_4}) \in [a_2-2\eps_0, a_2 + 2\eps_0]) > p_3.
\end{align*}
Let $\eps_2 \in( \eps_1/2,\infty)$ be such that
\begin{align*}
\Pp^\pi(S_{t_4}\in[\eps_1/2,\eps_2], \Arg(Z_{t_4}) \in [a_2-2\eps_0, a_2 + 2\eps_0]) > p_3/2.
\end{align*}
Let $t_5 = 2\eps_2 /|b_2|$ and $t_6 = t_4 + t_5$. By \eqref{5.6.20}, for any $\eps_3>0$ and some $p_4>0$,
\begin{align*}
\Pp^\pi\Big(&\sup_{t_4\leq r \leq t_6}|X^1_r - X_r| \leq \eps_3,
S_{t_4}\in[\eps_1/2,\eps_2],\\
& \Arg(Z_{t}) \in [a_2-3\eps_0, a_2 + 3\eps_0] \text{\ \ for  all\ \  } t\in [t_4,  t_6]\Big) > p_4,
\end{align*}
where $X^1$ is
 the function
 defined in \eqref{6.11.1}.
Since $V''( z) < b_2 < 0$ for $ \Arg z \in [a_2-3\eps_0, a_2 + 3\eps_0]$, if the
event in the last formula holds then
\begin{align*}
S_{t_6} = S_{t_4} + \int _{t_4}^{t_6} V''(Z_s) \, ds
\leq \eps_2 + b_2 t_5 \leq -\eps_2.
\end{align*}
This implies that,
\begin{align}\label{5.6.21}
\Pp^\pi\Big(\sup_{t_4\leq r \leq t_6}|X^1_r - X_r| \leq \eps_3,
S_{t_4} \geq \eps_1/2, S_{t_6} \leq - \eps_2\Big) > p_4.
\end{align}

\noindent \emph{Step 3}.  By the L\'evy-It\^o representation we can write the stable L\'evy process $X$ in the form $X_t = J_t + \wt X_t$, where $J$ is a compound Poisson process comprising all jumps of $X$ which are greater than $\eps_0$ and $\wt X = X-J$ is an independent L\'evy process (accounting for all small jumps of $X$).  Let us denote by $\lambda = \lambda(\alpha,\eps_0)$ the rate of the compound Poisson process $J$.

 Let $(\wt Y, \wt S)$ be the solution to \eqref{sdemain}, with $X_t$ replaced by $\wt X_t$ for $t\geq t_4$. Take $\eps_3 < \eps_0/2$.
Then
$\sup_{t_4\leq r \leq t_6}|X^1_r - \wt X_r| \le \eps_3$ entails that $\sup_{t_4\leq r \leq t_6}|J_{t_4} -J_r|=0$. Thus, \eqref{5.6.21} becomes
\begin{align*}
    &\Pp^\pi\Big(\sup_{t_4\leq r \leq t_6}| X_r^1 - \wt X_r| \leq \eps_3,\:  \wt S_{t_4} \geq \tfrac{\eps_1}2,\: \wt S_{t_6} \leq - \eps_2\: \Big) \\
    &\geq \Pp^\pi\Big(\sup_{t_4\leq r \leq t_6}| X_r^1 - \wt X_{r}| \leq \eps_3,\: \sup_{t_4\leq r \leq t_6}| J_{t_4} - J_{r}|=0,\:  \wt S_{t_4} \geq \tfrac{\eps_1}2,\: \wt S_{t_6} \leq - \eps_2\: \Big) \\
    &> p_4>0.
\end{align*}

Let $\tau$ be the time of the first jump of $J$ in the
interval $[t_4, t_6]$; we set $\tau=t_6$ if there is no such
jump.  We can represent $\{(Y_t,S_t), 0\leq t \leq \tau\}$ in the following way, $(Y_t,S_t) = (\wt Y_t, \wt S_t)$ for  $0\leq t < \tau$, $S_\tau = \wt S_\tau$, and $Y_\tau = \wt Y_\tau + J_{\tau} - J_{\tau-}$.

We say that a non-negative measure $\mu_1$ is a component of a non-negative measure $\mu_2$ if $\mu_2 = \mu_1 + \mu_3$ for some non-negative measure $\mu_3$. Let $\mu(dz,ds) = \Pp^\pi(Z_\tau\in dz, S_\tau \in ds)$. We will argue that $\mu(dz, ds)$ has a component with a density bounded below by $c_2 >0$ on $\uni \times (-\eps_2, \eps_1/2)$.  We find  for every Borel set $A\subset\uni$ of arc length $|A|$ and every interval $(s_1,s_2) \subset (-\eps_2, \eps_1/2)$
\begin{align*}\small
    &\mu(A\times (s_1,s_2))\\
    &= \Pp^\pi\left(Z_\tau\in A,\: S_\tau\in (s_1,s_2)\right)\\
    &\geq \Pp^\pi\Big(Z_\tau\in A, S_\tau\in (s_1,s_2),  \sup_{t_4\leq r \leq t_6}| X_r^1 - \wt X_{r}| \leq \eps_3,\:  \wt S_{t_4} \geq \tfrac{\eps_1}2,\:  \wt S_{t_6} \leq - \eps_2\:  \Big)\\
    &\geq \Pp^\pi\Big(e^{i(J_\tau- J_{\tau-})}\in e^{-i\wt X_{\tau-}} A,\: \wt S_\tau\in (s_1,s_2),\\
    &\qquad \sup_{t_4\leq r \leq t_6}| X_r^1 - \wt X_{r}| \leq \eps_3,\wt S_{t_4} \geq \eps_1/2,\: \wt S_{t_6} \leq - \eps_2,\: N^J=1\Big).
\end{align*}
Here $N^J$ counts the number of jumps of the process $J$ occurring during the interval $[t_4,t_6]$. Without loss of generality we can assume that $\eps_0 < 2\pi$. In this case the density of the jump measure of $J$ is bounded below by $c_3>0$ on $(2\pi,4\pi)$. Observe that the processes $(\wt X, \wt S)$ and $J$ are independent. Conditional on $\{N^J=1\}$, $\tau$ is uniformly distributed on $[t_4,t_6]$, and the probability of the event $\{N^J = 1\}$ is $\lambda (t_6 - t_4) e^{-\lambda (t_6 - t_4)}$. Thus,
\begin{align*}
    &\mu(A\times (s_1,s_2))  \\
    \geq & c_3 |A| \Pp^\pi\Big(\wt S_\tau\in (s_1,s_2) \Big|\: \sup_{t_4\leq r \leq t_6}| X_r^1 - \wt X_{r}| \leq \eps_3, \wt S_{t_4} \geq \eps_1/2, \wt S_{t_6} \leq - \eps_2,  N^J = 1\Big)\\
    &\qquad  \times   p_4\cdot \lambda (t_6 - t_4) e^{-\lambda (t_6 - t_4)}.
\end{align*}Since the process $\wt S$ spends at least $(s_2-s_1)/\|W''\|_\infty$  units of time in $(s_1,s_2)$ we finally arrive at
$$
    \mu(A,  (s_1,s_2) ) \geq  p_4 \lambda e^{-\lambda (t_6 - t_4)}  c_3 |A|  (s_2-s_1)/\|W''\|_\infty.
$$
This proves that $\mu(dz, ds)$ has a component with a density bounded below by $c_2=  p_4 \lambda e^{-\lambda (t_6 - t_4)}  c_3/\|W''\|_\infty $ on $\uni \times (-\eps_2, \eps_1/2)$.

\medskip
\noindent \emph{Step 4}. Let $ \eps_4 = \eps_1/2 \land
\eps_2>0$. We have shown that for some stopping time $\tau$, $
\Pp^\pi(Z_\tau\in dz, S_\tau \in ds)$ has a component with a
density bounded below by $c_2>0$ on $\uni \times (-\eps_4,
\eps_4)$. We can prove in an analogous way that for some
stopping time $\wh\tau$ and $\wh \eps_4>0$, $ \Pp^{\wh\pi}(\wh
Z_{\wh\tau}\in dz, \wh S_{\wh\tau} \in ds)$ has a component
with a density bounded below by $\wh c_2>0$ on $\uni \times
(-\wh\eps_4, \wh\eps_4)$.

Since $\pi \ne \wh\pi$, there exists a Borel set $A\subset \uni
\times \R$ such that $\pi(A) \ne \wh \pi(A)$. Moreover, since
any two stationary probability measures are either mutually
singular or identical,  cf.\ \cite[Chapter 2, Theorem 4]{S}, we have $\pi(A)>0$ and $\wh\pi(A) =0$
for some $A$. By the strong Markov property applied at $\tau$
and the ergodic theorem, see \cite[Chapter 1, page 12]{S}, we have $\Pp^\pi$-a.s.
\begin{align*}
    \lim_{t\to \infty} (1/t) \int_\tau^t \I_{\{(Z_s, S_s) \in A\}}\,ds = \pi(A)>0.
\end{align*}
Similarly, we see that $\Pp^{\wh\pi}$-a.s.
\begin{align*}
\lim_{t\to \infty} (1/t) \int_{\wh\tau}^t \I_{\{(\wh Z_s,\wh S_s) \in A\}}\,ds
= \wh\pi(A)=0.
\end{align*}
Since the distributions of $( Z_{\tau},  S_{\tau})$ and $(\wh
Z_{\wh\tau}, \wh S_{\wh\tau})$ have mutually absolutely
continuous components,  the last two statements contradict each
other. This shows that we must have $\pi = \wh\pi$.
\end{proof}

\begin{remark}
 It is not hard to show that Theorem \ref{th:m3.1} holds even if we take $\alpha = 2$ in \eqref{sdemain}, that is, if $X_t$ is Brownian motion. It seems that for $\alpha = 2$ uniqueness of the stationary distribution can be proved using techniques employed in Proposition 4.8 in \cite{BBCH}. A close inspection of the proofs in this section reveals that our results remain also valid if $X_t$ is a symmetric L\'evy process with jump measure having full support.
\end{remark}

\section{Smoothness of $T_t f$}\label{sec3}

In this section, we will show that if $f \in C_b^2$ then $T_t f \in C_b^2$ where $\{T_t\}_{t \ge 0}$ is the semigroup of a process defined by a stochastic differential equation driven by a L\'evy process. We use this result to show Proposition \ref{TtD} but it may well be of independent interest. We found some related results in the literature but none of them was sufficiently strong for our purposes. The key element of the proof are explicit bounds for derivatives of the flow of  solutions to the SDE. This is done in Proposition \ref{path}. We provide a direct and elementary proof of this proposition. Note that our bounds are non-random and do not depend on the sample path. This is a new feature in this type of analysis since usually, see e.g.\ Kunita \cite{kunita}, the constants are random since they are derived with the Kolmogorov-Chentsov-Totoki lemma or a Borel-Cantelli argument. Let us, however, point out that there is an alternative way of proving Proposition \ref{path}. It is possible to use \cite[Theorems V.39, V.40]{Pr} and \cite[formula (D), p.\ 305]{Pr} to obtain bounds for derivatives of the flow. Since this alternative approach demands similar arguments and is not shorter than our proof of Proposition \ref{path}, we decided to prove Proposition \ref{path} directly.

Consider the following system of stochastic differential equations in $\Rn$,
\begin{equation}\label{sdeY}
    \begin{cases}
        \displaystyle
        dY_1(t)  =  dX_1(t) + V_1(Y(t))\, dt, \\[\medskipamount]
        \displaystyle
        \quad\vdots \\[\medskipamount]
        \displaystyle
        dY_n(t)  =  dX_n(t) + V_n(Y(t))\, dt,
    \end{cases}
\end{equation}
where $Y(t) = (Y_1(t),\ldots,Y_n(t)) \in \Rn$, $X(t) = (X_1(t),\ldots,X_n(t)) \in \Rn$. We assume that $X(0) = 0$, $X_1, \ldots, X_n$ are L\'evy processes on $\R$ and $V_i: \Rn \to \R$ are locally Lipschitz. We allow $X_1,\ldots,X_n$ to be degenerate, i.e.\ some or all $X_i$ may be identically equal to $0$.

By \cite[Theorem V.38]{Pr} it follows that if $Y(0) = x$ then there exists a stopping time $\zeta(x,\omega): \Rn \times \Omega \to [0,\infty]$ and there exists a unique solution of \eqref{sdeY} with $Y(0) = x$ with $\limsup_{t \to \zeta(x,\cdot)} |Y(t)| = \infty$ a.s.\ on $\zeta < \infty$; $\zeta$ is called the \emph{explosion time}. In order to apply \cite[Theorem V.38]{Pr} we take in the equations marked $(\otimes)$ in \cite[p.\ 302]{Pr} $m = n + 1$, $X_t^i = Y_i(t)$, $x^i = Y_i(0)$, $Z_t^{\alpha} = X_{\alpha}(t)$ for $\alpha \in \{1,\ldots,n\}$, $Z_t^{n + 1} = t$ and $f_{\alpha}^i = \delta_{\alpha i}$ for $\alpha, i \in \{1,\ldots,n\}$ and $f_{n + 1}^i(x) = V_i(x)$ for $i \in \{1,\ldots,n\}$.

By $Y^x(t)$ we denote the process with starting point $Y^x(0) = x$.
In the rest of this section, we will assume that \eqref{sdeY} holds not only a.s.\ but for all $\omega \in \Omega$. More precisely, we can and will assume that the solution to \eqref{sdeY} is constructed on a probability space $\Omega$ such that $X(0)=0$ and
\begin{align*}
Y^x(t) = x + X(t) + \int_0^t V(Y(s))\,ds,
\end{align*}
for all $t\geq 0$ and all $\omega \in \Omega$.

Set
$$
    \|x\| = \max\{|x_1|,\ldots,|x_n|\}, \quad x = (x_1,\ldots,x_n),
$$
and
$$
    B^*(x,r) = \{y \in \Rn \::\:  \|y - x\| < r\}, \quad x \in \Rn,\; r > 0.
$$
For $f: \Rn \to \R$ and $A \subset \Rn$ we write $D^{(1)}f = \nabla f$,
\begin{gather*}
    \|f\|_{\infty,A} = \sup_{x \in A} |f(x)|,
\qquad
    \|D^{(j)} f\|_{\infty,A}
    =  \sum_{|\alpha|=j} \sup_{x \in A} |D^\alpha f(x)|,\\
    \|f\|_{(j),A} = \|f\|_{\infty,A} + \|D^{(1)}f\|_{\infty,A} + \ldots + \|D^{(j)}f\|_{\infty,A}.
\end{gather*}
When $A = \Rn$ we drop $A$ from this notation. For $V = (V_1,\ldots,V_n)$ from \eqref{sdeY} and $A \subset \Rn$ we put
\begin{gather*}
    \|V\|_{\infty,A} = \sum_{i=1}^n \|V_i\|_{\infty,A},\quad
    \|D^{(j)}V\|_{\infty,A} = \sum_{i=1}^n \|D^{(j)} V_i\|_{\infty,A}. \\
    \|V\|_{(j),A} = \|V\|_{\infty,A} + \|D^{(1)}V\|_{\infty,A} + \ldots + \|D^{(j)}V\|_{\infty,A}.
\end{gather*}

For $f: \Rn \to\R$, $x \in \Rn$ and $0 \le t < \infty$ we define the operator $T_t$ by
\begin{equation}\label{semigroup}
    T_t f(x) = \Ee\big[f(Y^x(t)); t < \zeta(x)\big].
\end{equation}

Before formulating the results for the process $Y(t)$ let us go
 back for a moment to the original problem
\eqref{sdemain}, that is,
$$
    \begin{cases}
        \displaystyle
        dY_t  =  dX_t + W'(Y_t) S_t\, dt, \\[\medskipamount]
        \displaystyle
        dS_t  =  W''(Y_t) \, dt.
    \end{cases}
$$
This SDE is of type \eqref{sdeY} because
 we can rewrite it as
\begin{equation}
\label{newsde}
    \begin{cases}
        \displaystyle
        dY_1(t)  =   dX_1(t) + V_1(Y(t))\, dt, \\[\medskipamount]
        \displaystyle
        dY_2(t)  =   dX_2(t) + V_2(Y(t))\, dt,
    \end{cases}
\end{equation}
where $X_1(t) = X_t$ is a symmetric $\alpha$-stable L\'evy process on $\R$, $\alpha \in (0,2)$, $X_2(t) \equiv 0$, $V_1(y_1,y_2) = W'(y_1)y_2$, $V_2(y_1,y_2) = W''(y_1)$. By Lemma \ref{existence} there exists a unique solution
to
 this SDE and the explosion time for this process is
infinite a.s.
 We want to show that $T_t f \in C_b^2$ whenever $f \in C^2_b$. Our
proof of Theorem \ref{Tt}
 requires that $V_i$ and its derivatives up to order 3 are bounded.
However,
 $V_1(y_1,y_2) = W'(y_1) y_2$ is not bounded on $\Rt$. We 
will
 circumvent this difficulty by
proving in Proposition \ref{coreTt}
 that $T_t f \in C_*^{2}(\Rt)$ whenever $f \in C_*^{2}(\Rt)$, where $C_*^2(\Rt)$ is given by Definition \ref{class}.

Let us briefly discuss the reasons that made us choose this particular set of
functions, $C_*^{2}(\Rt)$.
 This discussion gives also
an explanation for the specific assumptions in the main result of this section, Theorem \ref{Tt}.

Assume that $f \in C^2(\Rt)$ and $\supp{f} \subset K_0 = \R \times [-r,r]$, $r > 0$. Fix $t_0 < \infty$. If $|s| = |S_0| > r + t_0 \|W''\|_{\infty}$ then for $t \le t_0$,
$$
    \left|S_t^{(y,s)}\right| = \left|s + \int_{0}^{t} W''(Y_u^{(y,s)}) \, du\right| > r
$$
and, therefore,
$$
    T_t f(y,s) = \Ee f\big(Y_t^{(y,s)},S_t^{(y,s)}\big) = 0.
$$
It follows
 that if $t \le t_0$ then
\begin{equation}\label{suppK}
    \supp (T_t f) \subset K = \R \times \big[-r - t_0 \|W''\|_{\infty},\, r + t_0 \|W''\|_{\infty}\big].
\end{equation}
For technical reasons, we enlarge $K$ as follows,
$$
    K_3 = \R \times \big(-r - t_0 \|W''\|_{\infty} - 3, \, r + t_0 \|W''\|_{\infty} + 3\big).
$$
In view of \eqref{suppK}, we have to consider only starting points $(y,s) \in K$ in
order to prove that $T_t f \in C_*^2(\Rt)$. Note that for the starting point $(y,s) \in K_3$ and $t \le t_0$ we have
$$
    \left|S_t^{(y,s)}\right|
    = \left|s + \int_{0}^{t} W''(Y_u^{(y,s)}) \, du\right|
    \le  r + 2 t_0 \| W''\|_{\infty} +3.
$$
Thus for all starting points $(y,s) \in K_3$ and $t \le t_0$, 
\begin{equation}\label{M}
    \big(Y_t^{(y,s)},S_t^{(y,s)}\big)
    \in M
:=
 \R \times \big[-r - 2t_0 \|W''\|_{\infty} - 3, \, r + 2t_0 \|W''\|_{\infty} + 3\big].
\end{equation}
But the function
$V_1(y_1,y_2) = W'(y_1)y_2$ is bounded on $M$. Using our assumptions on $W$, namely, periodicity of $W$ and $W \in C^5$, we obtain also that the derivatives of $V_1(y_1,y_2) = W'(y_1) y_2$ up to order 3 are bounded on $M$.

\bigskip
Now we return to the general process $Y(t)$. Let us formulate the main result for this process.

\begin{theorem}\label{Tt}
Let $f: \Rn \to \R$ be a function in $C_b^2$.
 Fix $0 < t_0 < \infty$. Let $Y^x(t)$ be a solution of \eqref{sdeY}. Assume that the explosion time $\zeta(x,\omega) \equiv \infty$ for all $x \in \Rn$ and all $\omega \in \Omega$. Let $T_t f$ be defined by \eqref{semigroup}. Assume
that $K \subset \Rn$, for every $t \le t_0$ $\supp(T_t f) \subset K$
 and that there exists a convex set $M \subset \Rn$ such that
$Y^x(t,\omega) \in M$ for all $x \in K_3 := \bigcup_{x \in K} B^*(x,3)$, $t \le t_0$, and $\omega \in \Omega$. Assume that $\|V\|_{\infty,M} < \infty$ and $\|D^{(j)}V\|_{\infty,M} < \infty$ for $j = 1,2,3$.
 Then we have
$$
    T_t f \in C_b^2\quad\text{for all}\quad t\leq t_0.
$$
\end{theorem}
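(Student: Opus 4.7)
The strategy is to reduce the $C_b^2$-regularity of $T_tf$ to the \emph{pathwise} $C^2$-regularity of the stochastic flow $x\mapsto Y^x(t)$ supplied by Proposition~\ref{path}, and then to interchange differentiation in $x$ with the expectation. First I would exploit the support hypothesis: because $\supp(T_tf)\subset K$, the function $T_tf$ vanishes identically on $\Rn\setminus K$, so it suffices to prove that $T_tf$ is twice continuously differentiable on an open neighborhood of $K$, say $K_1:=\bigcup_{x\in K}B^*(x,1)\subset K_3$, with derivative bounds that are uniform in $x\in K_1$. Gluing the resulting $C^2$ restriction with the zero extension outside $K$ then yields $T_tf\in C_b^2(\Rn)$.

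For $x\in K_1\subset K_3$ the trajectory $Y^x(t,\omega)$ stays in the convex set $M$ for every $t\le t_0$ and every $\omega\in\Omega$, so up to time $t_0$ the SDE only ``sees'' $V$ on $M$, where $\|V\|_{(3),M}<\infty$. Proposition~\ref{path}, applied to this restricted data, produces pathwise first- and second-order spatial derivatives of $x\mapsto Y^x(t,\omega)$ with deterministic (non-random) $L^\infty$ bounds depending only on $t_0$, $n$ and $\|V\|_{(3),M}$. The chain rule then gives the pathwise identities
\begin{align*}
    \partial_{x_i}\bigl(f\circ Y^x(t)\bigr)
    &= \sum_k (\partial_k f)(Y^x(t))\,\partial_{x_i}Y^x_k(t), \\
    \partial^2_{x_ix_j}\bigl(f\circ Y^x(t)\bigr)
    &= \sum_{k,\ell}(\partial_k\partial_\ell f)(Y^x(t))\,\partial_{x_i}Y^x_k(t)\,\partial_{x_j}Y^x_\ell(t) \\
    &\quad + \sum_k (\partial_k f)(Y^x(t))\,\partial^2_{x_ix_j}Y^x_k(t),
\end{align*}
whose summands are bounded uniformly in $\omega$ by $\|f\|_{(2)}$ times the flow constants from Proposition~\ref{path}.

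A dominated-convergence argument applied to difference quotients of $f\circ Y^x(t)$ then permits the interchange of $\partial_{x_i}$ and $\partial^2_{x_ix_j}$ with $\Ee$, simultaneously expressing the derivatives of $T_tf$ as expectations of the pathwise derivatives above and yielding the required uniform bound on $K_1$. Continuity of these derivatives in $x$ follows, again by dominated convergence, from the pathwise continuity of the flow derivatives in the initial condition.

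The main obstacle is clearly Proposition~\ref{path} itself: producing \emph{non-random} pathwise bounds on the first two spatial derivatives of the flow, rather than the random bounds delivered by a Kolmogorov--Chentsov--Totoki-type approach such as the one in Kunita~\cite{kunita}. Once that pathwise smoothness of the flow is granted, the rest of the proof is the routine chain-rule and dominated-convergence bookkeeping sketched above.
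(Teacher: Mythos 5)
Your reduction is the right one and matches the paper's: Lemma \ref{Tt1} in the paper is exactly your ``chain rule plus dominated convergence'' step, producing
$D_i T_t g(x) = \Ee\bigl(D^{(1)}g(Y^x(t))\cdot D_iY^x(t)\bigr)$ and the corresponding second-order formula, with the difference quotients dominated by the non-random Lipschitz bounds (i), (iii), (v) of Proposition \ref{path}. But there is a genuine gap in how you invoke Proposition \ref{path}. You claim it yields flow-derivative bounds ``depending only on $t_0$, $n$ and $\|V\|_{(3),M}$'' valid for all $t\le t_0$. It does not: every estimate in Proposition \ref{path} is restricted to $0<t\le\term$ with
$\term=\tfrac{1}{2\|D^{(1)}V\|_{\infty,M}}\wedge t_0$, because the underlying argument is a contraction-type bound of the form $c\le \|h\|+\term\,\|D^{(1)}V\|_{\infty,M}\,c$, which only closes when $\term\,\|D^{(1)}V\|_{\infty,M}\le\tfrac12$. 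Whenever $\|D^{(1)}V\|_{\infty,M}>1/(2t_0)$ one has $\term<t_0$, and your argument as written proves $T_tf\in C_b^2$ only for $t\le\term$, not for all $t\le t_0$ as the theorem asserts.

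The missing idea is the iteration in time. The paper closes the gap by induction using the semigroup property: assuming $T_sf\in C_b^2$, it applies Lemma \ref{Tt1} to $g:=T_sf$ (checking that $g$ again satisfies the hypotheses --- $g\in C_b^2$ from the induction hypothesis, $\supp(T_rg)\subset K$ from the standing support assumption) to conclude $T_{s+r}f=T_rg\in C_b^2$ for all $r\le\term$ with $s+r\le t_0$, and then advances in steps of length $\term$ until $t_0$ is reached. Note that this is precisely where the hypothesis ``$\supp(T_tf)\subset K$ for \emph{every} $t\le t_0$'' earns its keep. An alternative repair would be to strengthen Proposition \ref{path} itself via a genuine Gronwall argument to obtain bounds of order $e^{C t_0}$ valid on all of $[0,t_0]$, but that is not what the proposition as stated provides, and your sketch neither proves such a strengthening nor supplies the time-iteration that substitutes for it.
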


\begin{remark}\label{remTt}
When $\|V\|_{(3)} < \infty$
(i.e.\ when the assumptions of Theorem \ref{Tt} hold with $K = M = \Rn$)
 then the above theorem
 implies that we have for any $f \in C_b^2$
$$
    T_t f \in C_b^2\quad\text{for all}\quad t>0.
$$
\end{remark}

The first step in proving
Theorem \ref{Tt}
 will be the following proposition.
\begin{proposition}\label{path}
Fix $0 < t_0 < \infty$. Let $Y^x(t)$ be a solution of \eqref{sdeY}. Assume that the explosion time $\zeta(x,\omega) \equiv \infty$ for all $x \in \Rn$ and all $\omega \in \Omega$. Let $K \subset \Rn$. Assume that there exists a convex set $M \subset \Rn$ such that $Y^x(t,\omega) \in M$ for all $x \in K_3 := \bigcup_{x \in K} B^*(x,3)$, $t \le t_0$, and $\omega \in \Omega$. Assume that $\|V\|_{(3),M} < \infty$.
 Put
\begin{align}\label{5.4.1}
 \term
 := \frac{1}{2 \, \|D^{(1)}V\|_{\infty,M}} \wedge t_0,
    \qquad \Big(\frac 10 := \infty\Big).
\end{align}
For every $\omega \in \Omega$ we
have the following.
\begin{enumerate}
\item[\upshape (i)]
    For all $0 < t \le \term $, $x \in K_2 =\bigcup_{x \in K} B^*(x,2)$, $h \in \Rn$, $\|h\| < 1$,
\begin{align}\label{5.4.2}
        \|Y^{x + h}(t,\omega) - Y^x(t,\omega)\| \le 2 \|h\|.
\end{align}

\item[\upshape(ii)]
Recall that $e_i$ is the $i$-th unit vector in the usual orthonormal basis for $\R^n$.
    For all $0 < t \le \term $, $x \in K_2$, $i \in \{1,\ldots,n\}$,
    $$
        D_iY^x(t,\omega)
:=
 \lim_{u \to 0} \frac{Y^{x + u e_i}(t,\omega) - Y^x(t,\omega)}{u}
    $$
    exists, and
\begin{align}\label{5.4.3}
\|D_iY^x(t,\omega)\| \le 2.
\end{align}

    \noindent
    We will write
 $D_iY^x(t,\omega) = (D_iY_1^x(t,\omega),\ldots,D_iY_n^x(t,\omega))$.

\item[\upshape(iii)]
    For all $0 < t \le \term$, $x \in K_1 = \bigcup_{x \in K} B^*(x,1)$, $h \in \Rn$, $\|h\| < 1$, $i \in \{1,\ldots,n\}$,
\begin{align}\label{5.4.4}
        \|D_i Y^{x + h}(t,\omega) - D_i Y^x(t,\omega)\| \le 8\,\|D^{(2)}V\|_{\infty,M}\, \term  \,\|h\|.
\end{align}

\item[\upshape(iv)]
    For all $0 < t \le \term $, $x \in K_1$, $i,k \in \{1,\ldots,n\}$,
    $$
        D_{ik}Y^x(t,\omega)
:=
 \lim_{u \to 0} \frac{D_iY^{x + u e_k}(t,\omega) - D_iY^x(t,\omega)}{u}
    $$
    exists and
\begin{align}\label{5.4.5}
\|D_{ik}Y^x(t,\omega)\| \le 8\,
\|D^{(2)}V\|_{\infty,M}\, \term .
\end{align}

    \noindent
    We will write
 $D_{ik}Y^x(t,\omega) = (D_{ik}Y_1^x(t,\omega),\ldots,D_{ik}Y_n^x(t,\omega))$.

\item[\upshape(v)]
    For all $0 < t \le \term $, $x \in K$, $h \in \Rn$, $\|h\| < 1$, $i,k \in \{1,\ldots,n\}$,
    \begin{align*}
        \|D_{ik} Y^{x + h}&(t,\omega) - D_{ik} Y^x(t,\omega)\| \\
        &\le 96\,\|D^{(2)}V\|_{\infty,M}^2\, \term ^2\, \|h\| + 16\, \|D^{(3)}V\|_{\infty,M} \,\term  \,\|h\|.
    \end{align*}
\end{enumerate}
\end{proposition}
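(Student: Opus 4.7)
The plan is to iterate Gronwall-type arguments on \emph{pathwise} integral equations obtained by differencing copies of \eqref{sdeY}. The crucial observation is that upon subtracting
\[
Y^{x+h}(t,\omega) - Y^x(t,\omega) = h + \int_0^t \big[V(Y^{x+h}(s,\omega)) - V(Y^x(s,\omega))\big]\,ds,
\]
the driving noise $X(t,\omega)$ cancels exactly, leaving an inhomogeneous integral equation whose coefficients involve only $V$ and its derivatives evaluated along trajectories that, by hypothesis, lie in the convex set $M$. This is precisely why no random constants appear. The choice \eqref{5.4.1} of $\tau$ is tailored so that $e^{\|D^{(1)}V\|_{\infty,M}\,\tau} \le e^{1/2} \le 2$, which is where the factor $2$ in parts (i)--(iv) originates.

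For part (i), I would apply the mean value theorem coordinate-wise along the straight segment from $Y^x(s,\omega)$ to $Y^{x+h}(s,\omega)$; this segment lies in $M$ by convexity and the inclusion $x, x+h \in K_3$. The integrand is then bounded by $\|D^{(1)}V\|_{\infty,M}\,\|Y^{x+h}(s,\omega) - Y^x(s,\omega)\|$, and Gronwall gives $\|Y^{x+h}(t) - Y^x(t)\| \le \|h\|\,e^{\|D^{(1)}V\|_{\infty,M}t} \le 2\|h\|$ for $t\le\tau$. For part (ii), set $\phi_u(t) = u^{-1}(Y^{x+ue_i}(t) - Y^x(t))$; the mean value theorem writes the corresponding integrand as $M_u(s)\phi_u(s)$ with $M_u(s,\omega) \to DV(Y^x(s,\omega))$ by (i) plus continuity of $DV$. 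A Gronwall estimate then shows that $\phi_u(t)$ is Cauchy as $u\to 0$, and the limit $D_iY^x(t)$ solves the linear equation $\psi(t) = e_i + \int_0^t DV(Y^x(s))\psi(s)\,ds$; one more Gronwall application yields $\|D_iY^x(t)\| \le 2$.

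Parts (iii)--(v) follow the same inductive pattern. For (iii), subtracting the linear equations satisfied by $D_iY^{x+h}$ and $D_iY^x$ produces an inhomogeneity $\int_0^t [DV(Y^{x+h}) - DV(Y^x)]D_iY^{x+h}\,ds$ bounded via $\|D^{(2)}V\|_{\infty,M}$, (i), and (ii) by $4\|D^{(2)}V\|_{\infty,M}\|h\|\tau$, plus a Gronwall kernel $DV(Y^x)[D_iY^{x+h} - D_iY^x]$; multiplying by $e^{1/2}\le 2$ delivers \eqref{5.4.4}. Part (iv) is the analogue of (ii) one derivative higher: differentiating the equation for $D_iY^x$ in the $k$-th direction produces
\[
D_{ik}Y^x(t) = \int_0^t D^2V(Y^x(s))\big[D_kY^x(s), D_iY^x(s)\big]\,ds + \int_0^t DV(Y^x(s))\,D_{ik}Y^x(s)\,ds,
\]
whose inhomogeneity is $\le 4\|D^{(2)}V\|_{\infty,M}t$, and Gronwall supplies \eqref{5.4.5}. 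For (v), I would expand $D_{ik}Y^{x+h} - D_{ik}Y^x$ into four inhomogeneous pieces: one involving the difference $D^2V(Y^{x+h}) - D^2V(Y^x)$ and three bounded derivatives (contributing $8\|D^{(3)}V\|_{\infty,M}\|h\|t$), two involving $D^2V(Y^x)$ acting on a difference of first-order flow derivatives controlled by (iii) (each contributing $16\|D^{(2)}V\|_{\infty,M}^2\tau\|h\|t$), and one involving $DV(Y^{x+h}) - DV(Y^x)$ times $D_{ik}Y^{x+h}$ (another $16\|D^{(2)}V\|_{\infty,M}^2\tau\|h\|t$); collecting at $t=\tau$ and using $e^{1/2}\le 2$ produces exactly $96\|D^{(2)}V\|_{\infty,M}^2\tau^2\|h\| + 16\|D^{(3)}V\|_{\infty,M}\tau\|h\|$.

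The principal difficulty is not any single estimate but the combinatorial bookkeeping: at each level one must verify that the relevant endpoints still lie in the appropriate $K_j$ so that convexity of $M$ can be invoked, and one must establish the existence of $D_iY^x$ and $D_{ik}Y^x$ as genuine \emph{pathwise} limits of difference quotients, rather than merely as limits in mean or in probability. Both are handled uniformly in $\omega$ by the same Gronwall machinery, which is precisely where the insistence on deterministic, $\omega$-free constants in the bounds pays off.
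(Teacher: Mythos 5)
Your proposal is correct and follows essentially the same route as the paper: pathwise differencing of the integral equations so that the driving noise cancels, level-by-level estimates using convexity of $M$ and the inclusions $K\subset K_1\subset K_2\subset K_3$, and a Cauchy-type argument for the pathwise existence of $D_iY^x$ and $D_{ik}Y^x$; your constants match the paper's exactly. The only cosmetic difference is that you extract the factor $2$ from Gronwall's exponential $e^{1/2}\le 2$, whereas the paper absorbs the term $\term\,\|D^{(1)}V\|_{\infty,M}\,c\le c/2$ into the left-hand side, giving $1/(1-\tfrac12)=2$.
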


\begin{remark}
The existence of $D_i Y^x(t)$ and $D_{ik} Y^x(t)$ follows from \cite[Theorem V.40]{Pr}. What is new here are the explicit bounds for $D_i Y^x(t)$ and $D_{ik} Y^x(t)$ which are needed in the proof of Theorem \ref{Tt}, see Lemma \ref{Tt1}. The proof of Proposition \ref{path} is self-contained. We do not use \cite[Theorem V.40]{Pr}.
\end{remark}

\begin{proof}[Proof of Proposition \ref{path}]
The proof has a structure that might be amenable to presentation as a case of mathematical induction. After careful consideration we came to the conclusion that setting up an inductive argument would not shorten the proof.

Recall that we assume that \eqref{sdeY} holds for all $\omega \in \Omega$, not only a.s.
Throughout this proof we fix one path $\omega \in \Omega$.

\medskip\textbf{(i)}
Let $x \in K_2$, $h \in \Rn$, $\|h\|<1$ and $0 < t \le \term $. Recall that $X(0) = 0$. For any $1 \le j \le n$ we have
\begin{equation}\label{diffY}
    Y_j^{x + h}(t) - Y_j^x(t) = h_j + \int_0^t \left[V_j(Y^{x + h}(s)) - V_j(Y^x(s)) \right] ds.
\end{equation}
Let
$$
    c_1 := c_1(x,h)
 := \sup_{0 < t \le \term } \|Y^{x + h}(t) - Y^x(t)\|.
$$
Note that for $0 < t \le \term $ we have $Y^x(t) \in M$ and $Y^{x + h}(t) \in M$.
By \eqref{diffY} and $\|V\|_{\infty,M} < \infty$ we get that $c_1$ is finite. Moreover,
\begin{align*}
    \|Y_j^{x + h}(t) - Y_j^x(t)\|
    &\le \|h\| +  \int_0^t \|D^{(1)} V_j\|_{\infty,M}  \|Y^{x + h}(s) - Y^x(s)\| \, ds \\
    &\le \|h\| +  \,\term \, \|D^{(1)} V_j\|_{\infty,M} \,c_1.
\end{align*}
Hence,
$$
    c_1 \le \|h\| +  \term \, \|D^{(1)}V\|_{\infty,M} \, c_1,
$$
which, when combined with \eqref{5.4.1}, gives
$$
\sup_{0 < t \le \term } \|Y^{x + h}(t) - Y^x(t)\| =
    c_1 \le \frac{\|h\|}{1 -  \term  \,\|D^{(1)}V\|_{\infty,M}} \le 2 \|h\|.
$$

\medskip\textbf{(ii)}
Denote
$$
    R_j^{x,h}(t) = Y_j^{x + h}(t) - Y_j^x(t)
$$
and $R^{x,h}(t) = (R_1^{x,h}(t),\ldots,R_n^{x,h}(t))$. Using
the
 Taylor expansion we get from \eqref{diffY},
\begin{equation}\label{defR}
    R_j^{x,h}(t) = h_j + \int_0^t D^{(1)} V_j(Y^x(s))
\cdot
 R^{x,h}(s) \, ds + O(\|h\|^2).
\end{equation}
For $i \in \{1,\ldots,n\}$ and $h = u e_i$, let
$$
c_2 = c_2(x,i) = \max_{1 \le j \le n} \sup_{0 < t \le \term }
\left(\limsup_{u \to 0} \frac{R_j^{x,h}(t)}{u} - \liminf_{u \to 0} \frac{R_j^{x,h}(t)}{u}\right).
$$
Note that $ c_2 $ is finite because for $u \in (-1,1)$ we have $|R_j^{x,h}(t)| \le 2
u$, by \eqref{5.4.2}. Consider
 $0 < t \le \term $, $x \in K_2$, $i,j \in \{1,\ldots,n\}$. From \eqref{defR} we
obtain for
$u,u'\in (-1,1) \setminus \{0\}$, $h= ue_i$ and $h'=u'e_i$,
\begin{align*}
    \frac{R_j^{x,h}(t)}{u} - \frac{R_j^{x,h'}(t)}{u'}
    &= \int_0^t \sum_{k = 1}^n D_k V_j(Y^x(s)) \left(\frac{R_k^{x,h}(s)}{u} - \frac{R_k^{x,h'}(s)}{u'}\right) ds + O(u) + O(u').
\end{align*}
Letting $u,u'\to 0$ leads to
$$
    \limsup_{u\to 0}\frac{R_j^{x,h}(t)}{u} - \liminf_{u'\to 0}\frac{R_j^{x,h'}(t)}{u'}
    \leq \term \,\|D^{(1)} V\|_{\infty,M}\cdot c_2,
$$
and since $0 < t \le \term $ and $j \in \{1,\ldots,n\}$
are
 arbitrary, we get
$$
    c_2 \le \term \, \|D^{(1)}V\|_{\infty,M} \cdot c_2.
$$
So
 $c_2 = 0$ which means that $D_iY^x(t)$ exists.
Estimate \eqref{5.4.3} is now an easy consequence of \eqref{5.4.2}.

\medskip\textbf{(iii)}
From \eqref{defR} and the bounded convergence theorem, we obtain
\begin{equation}\label{defDiY}
    D_i Y_j^x(t) = \delta_{ij} + \int_0^t D^{(1)} V_j(Y^x(s)) \cdot D_i Y^x(s) \, ds.
\end{equation}
Let $x \in K_1$, $h \in \Rn$, $\|h\|<1$ and $i \in \{1,\ldots,n\}$. Set
$$
    c_3 := c_3(x,h,i) := \sup_{0 < t \le \term } \|D_i Y^{x + h}(t) - D_i Y^x(t)\|.
$$
Because of
\eqref{5.4.3},
 $c_3$ is finite. For any $0 < t \le \term $ we have
\begin{equation}\label{diffDiYj}\begin{aligned}
    D_i &Y_j^{x + h}(t) - D_i Y_j^x(t) \\
    &=\int_0^t \left[ D^{(1)} V_j(Y^{x+h}(s)) \cdot D_i Y^{x+h}(s) - D^{(1)} V_j(Y^x(s)) \cdot D_i Y^x(s) \right] ds \\
    &=\int_0^t \Big(\left[D^{(1)} V_j(Y^{x+h}(s)) - D^{(1)} V_j(Y^x(s))\right] \cdot D_i Y^{x+h}(s) \\
    &\qquad +  D^{(1)} V_j(Y^{x}(s)) \cdot \left[D_i Y^{x+h}(s) -  D_i Y^x(s)\right] \Big) ds,
\end{aligned}\end{equation}
so
\begin{align*}
    \left|D_i Y_j^{x + h}(t) - D_i Y_j^x(t)\right|
    &\le \int_0^t \left[ \sum_{k = 1}^n |D_k V_j(Y^{x+h}(s)) - D_k V_j(Y^x(s))|\ |D_i Y_k^{x+h}(s)|\right. \\
    &\qquad\qquad \left.+  \sum_{k = 1}^n |D_k V_j(Y^{x}(s))|
\ |D_i Y_k^{x+h}(s) -  D_i Y_k^x(s)| \right] ds.
\end{align*}
In view of \eqref{5.4.2} and \eqref{5.4.3}, we have for
$0 < s \le \term $,
\begin{align*}
    \sum_{k=1}^n |D_k V_j(Y^{x+h}(s)) - D_k V_j(Y^x(s))|
    &\le  \|D^{(2)}V\|_{\infty,M} \|Y^{x+h}(s) - Y^x(s)\| \\
    &\le 2 \, \|D^{(2)}V\|_{\infty,M} \|h\|,
\end{align*}
$$
    \|D_i Y^{x+h}(s)\| \le 2, \qquad   \sum_{k = 1}^n |D_k V_j(Y^{x}(s))|  \le \|D^{(1)}V\|_{\infty,M}.
$$
It follows that
$$
    |D_i Y_j^{x + h}(t) - D_i Y_j^x(t)|
    \le 4 \, \|D^{(2)}V\|_{\infty,M}\, \term \, \|h\| +  \term \, \|D^{(1)}V\|_{\infty,M}\cdot c_3,
$$
so,
$$
    c_3
    \le 4 \, \|D^{(2)}V\|_{\infty,M}\, \term \, \|h\| +  \term \, \|D^{(1)}V\|_{\infty,M} \cdot c_3.
$$
By definition, $\term \leq 1/(2 \|D^{(1)}V\|_{\infty,M})$, so
$$
    c_3
    \le 4 \, \|D^{(2)}V\|_{\infty,M}\, \term \, \|h\| + c_3/2.
$$
This gives
$$
\sup_{0 < t \le \term } \|D_i Y^{x + h}(t) - D_i Y^x(t)\| =
    c_3 \le 8 \, \|D^{(2)}V\|_{\infty,M}\, \term \, \|h\|.
$$

\medskip\textbf{(iv)}
Set
$$
    Q_{i,j}^{x,h}(t) := D_i Y_j^{x + h}(t) - D_i Y_j^x(t)
$$
and $Q_i^{x,h}(t) = (Q_{i,1}^{x,h}(t),\ldots,Q_{i,n}^{x,h}(t))$. Using the
 Taylor expansion we get from \eqref{diffDiYj},
\begin{equation}\label{Qij1}\begin{aligned}
    Q_{i,j}^{x,h}(t)
    &= \int_0^t  \sum_{l = 1}^n D_i Y_l^{x + h}(s) \sum_{m = 1}^n D_{lm} V_j(Y^x(s)) R_m^{x,h}(s) \, ds + O(\|h\|^2) \\
    &\qquad+\int_0^t D^{(1)} V_j(Y^x(s)) \cdot Q_i^{x,h}(s) \, ds \\
    &=\int_0^t  \sum_{l = 1}^n D_i Y_l^{x + h}(s)  D^{(1)} D_{l} V_j(Y^x(s))\cdot R^{x,h}(s) \, ds + O(\|h\|^2) \\
    &\qquad+\int_0^t D^{(1)} V_j(Y^x(s)) \cdot Q_i^{x,h}(s) \, ds.
\end{aligned}\end{equation}
Consider
 $k \in \{1,\ldots,n\}$ and let $h = u e_k$. Define
$$
    c_4
    := c_4(x,i,k)
    := \max_{1 \le j \le n} \sup_{0 < t \le \term } \left(\limsup_{u \to 0} \frac{Q_{i,j}^{x,h}(t)}{u} - \liminf_{u \to 0} \frac{Q_{i,j}^{x,h}(t)}{u}\right).
$$
Note that $c_4$ is finite because we have $|Q_{i,j}^{x,h}(t)| \le 8 \, \|D^{(2)}V\|_{\infty,M} \, \term \, u$ for $u \in  (-1,1)$,
by \eqref{5.4.4}.
 For $u,u'\in (-1,1) \setminus \{0\}$,
$h=ue_k$ and $h'=u'e_k$, \eqref{Qij1} implies that,
\begin{align*}
    \frac{Q_{i,j}^{x,h}(t)}{u} - \frac{Q_{i,j}^{x,h'}(t)}{u'}
    &= \int_0^t  \sum_{l = 1}^n D_i Y_l^{x + h}(s)  D^{(1)} D_{l} V_j(Y^x(s))
\,\cdot\,
 \frac{R^{x,h}(s)}{u} \, ds + O(u) \\
    &\quad - \int_0^t  \sum_{l = 1}^n D_i Y_l^{x + h'}(s)  D^{(1)} D_{l}
V_j(Y^x(s)) \,\cdot\,
 \frac{R^{x,h'}(s)}{u'} \, ds + O(u')\\
    &\quad + \int_0^t D^{(1)} V_j(Y^x(s)) \left(\frac{Q_i^{x,h}(s)}{u} - \frac{Q_i^{x,h'}(s)}{u'} \right) ds.
\end{align*}
The first two integrals cancel in the limit as $u,u'\to 0$. To see that we can pass to the limit, we use the bounded convergence theorem. This theorem is applicable because \eqref{5.4.2} provides a bound for $\frac 1u\,R^{x,h}(s)$, \eqref{5.4.3} provides a bound for $D_i Y_l^{x + h}(s)$ and we also have $\|D^{(2)} V\|_{\infty,M} <\infty$, by assumption. Letting $u,u'\to 0$ we get
$$
    \limsup_{u \to 0} \frac{Q_{i,j}^{x,h}(t)}{u} - \liminf_{u' \to 0} \frac{Q_{i,j}^{x,h'}(t)}{u'}
    \le \term \,  \|D^{(1)}V\|_{\infty,M}\cdot c_4.
$$
Since $0 < t \le \term $ and $j \in \{1,\ldots,n\}$
are arbitrary we see that
$$
    c_4
    \le \term  \, \|D^{(1)}V\|_{\infty,M}\cdot c_4,
$$
so $c_4 = 0$; this proves that $D_{i k} Y^x(t)$ exists. The estimate \eqref{5.4.5} follows now from \eqref{5.4.4}.

\medskip\textbf{(v)}
By \eqref{Qij1} we get  for $h = u e_k$
\begin{align*}
    D_{ik} Y_j^x(t)
    = \lim_{u \to 0} \frac{Q_{i,j}^{x,h}(t)}{\|h\|}
    &= \int_0^t  \sum_{l = 1}^n D_i Y_l^{x}(s)\,  D^{(1)} D_{l} V_j(Y^x(s)) \cdot D_k Y^x(s) \, ds \\
    &\qquad+ \int_0^t D^{(1)} V_j(Y^x(s)) D_{ik} Y^x(s) \, ds.
\end{align*}
Let $x \in K$, $h \in \Rn$, $\|h\|<1$ and $i,k \in \{1,\ldots,n\}$. Put
$$
    c_5 := c_5(x,h,i,k) := \sup_{0 < t \le \term } \|D_{ik} Y^{x + h}(t) - D_{ik} Y^x(t)\|.
$$
Because of
\eqref{5.4.5},
 $c_5$ is finite. For any $0 < t \le \term $ and $j \in \{1,\ldots,n\}$ we have
\begin{align*}
    D_{ik} &Y_j^{x + h}(t) - D_{ik} Y_j^x(t) \\
    &= \int_0^t  \sum_{l = 1}^n \sum_{m = 1}^n \Big[ D_i Y_l^{x + h}(s) D_{lm} V_j(Y^{x + h}(s)) D_k Y_m^{x + h}(s) \\
    &\qquad- D_i Y_l^{x}(s) D_{lm} V_j(Y^{x}(s)) D_k Y_m^{x}(s) \Big] ds \\
    &\qquad+ \int_0^t \sum_{l = 1}^n \Big[D_l V_j(Y^{x + h}(s)) D_{ik} Y_l^{x + h}(s) - D_l V_j(Y^{x}(s)) D_{ik} Y_l^{x}(s) \Big] ds \\
    &= \text{I} + \text{II}.
\end{align*}
We obtain from \eqref{5.4.3}, \eqref{5.4.4} and \eqref{5.4.5},
\begin{align*}
    |\text{I}|
    &\le  \int_0^t  \sum_{l = 1}^n \sum_{m = 1}^n \Big[\left|D_{lm} V_j(Y^{x + h}(s)) D_i Y_l^{x + h}(s) \left[D_k Y_m^{x + h}(s) - D_k Y_m^{x}(s)\right]\right| \\
    &\qquad+ \left|D_{lm} V_j(Y^{x + h}(s)) D_k Y_m^{x}(s) \left[D_i Y_l^{x + h}(s) - D_i Y_l^{x}(s)\right]\right| \\
    &\qquad+ \left|D_i Y_l^{x}(s) D_k Y_m^{x}(s) \left[D_{lm} V_j(Y^{x + h}(s)) - D_{lm} V_j(Y^{x}(s))\right]\right| \Big] ds \\
    &\le \term  \,  \Big[\|D^{(2)}V\|_{\infty,M}^2\, 32 \, \term \, \|h\| + 8 \, \|D^{(3)}V\|_{\infty,M} \, \|h\| \Big],
\end{align*}
as well as
\begin{align*}
    |\text{II}|
    &\le \int_0^t \sum_{l = 1}^n \Big[\left|D_l V_j(Y^{x + h}(s)) \left[D_{ik} Y_l^{x + h}(s) - D_{ik} Y_l^{x}(s)\right] \right| \\
    &\qquad\qquad+ \left|D_{ik} Y_l^{x}(s) \left[D_l V_j(Y^{x + h}(s)) - D_l V_j(Y^{x}(s))\right]\right| \Big] ds \\
    &\le \term \, \Big[\|D^{(1)}V\|_{\infty,M}\cdot c_5 + 16 \, \|D^{(2)}V\|_{\infty,M}^2 \term \,  \|h\|\Big].
\end{align*}
Combining these two estimates we find for all $0 < t \le \term $ and $1 \le j \le n$,
\begin{align*}
    |D_{ik} & Y_j^{x + h}(t) - D_{ik} Y_j^x(t)| \\
    &\le 48  \,\|D^{(2)}V\|_{\infty,M}^2\, \term ^2 \|h\| + 8  \,\|D^{(3)}V\|_{\infty,M}\term  \,\|h\| + \term \, \|D^{(1)}V\|_{\infty,M} \cdot c_5.
\end{align*}
Hence,
$$
    c_5 \le 48 \, \|D^{(2)}V\|_{\infty,M}^2\, \term ^2 \,\|h\| + 8 \, \|D^{(3)}V\|_{\infty,M}\,\term \, \|h\| + \term \,  \|D^{(1)}V\|_{\infty,M}\cdot c_5,
$$
so, recalling \eqref{5.4.1},
$$
    c_5 \le 96  \, \|D^{(2)}V\|_{\infty,M}^2\, \term ^2\, \|h\| + 16 \, \|D^{(3)}V\|_{\infty,M}\,\term \, \|h\|,
$$
which finishes the proof.
\end{proof}
The next step in proving Theorem \ref{Tt} is the following lemma.

\begin{lemma}\label{Tt1}
Let $g: \Rn \to \R$ be
a function in $C_b^2$.
 Fix $0 < t_1 < \infty$ and let $Y^x(t)$ be the solution of \eqref{sdeY}. Assume that the explosion time $\zeta(x,\omega) \equiv \infty$ for all $x \in \Rn$ and all $\omega \in \Omega$. Let $T_t g$ be defined by \eqref{semigroup}. Assume
that $K \subset \Rn$,
 for every $t \le t_1$ $\supp{T_t g} \subset K$ and
 there exists a convex set $M \subset \Rn$ such that $Y^x(t,\omega) \in M$ for all $x \in K_3 := \bigcup_{x \in K} B^*(x,3)$, $t \le t_1$ and $ \omega \in \Omega$.
Assume that $\|V\|_{(3),M} < \infty$ and let
$$
    \tilde \term  = \frac{1}{2 \, \|D^{(1)}V\|_{\infty,M}} \wedge t_1
    \qquad \Big(\frac 10 := \infty\Big).
$$
Then we have
\begin{enumerate}
    \item[\upshape (i)]
    For all $0 < t \le \tilde \term $, $x \in K$
and $i \in \{1,\ldots,n\}$, the derivative
 $D_i T_t g(x)$ exists and
    \begin{equation}\label{Tt1a}
        D_i T_t g(x) = \Ee\left(D^{(1)} g(Y^x(t)) D_iY^x(t)\right).
    \end{equation}

\item[\upshape (ii)]
    For all $0 < t \le \tilde \term $, $x \in K$
and $i,k \in \{1,\ldots,n\}$, the derivative
 $D_{ik} T_t g(x)$ exists and
    \begin{align}\label{Tt1b}
        &D_{ik} T_t g(x) \\
        &=
 \Ee\left(D^{(1)} g(Y^x(t))
\,\cdot \, D_{ik}Y^x(t)
 + \sum_{j = 1}^n D_iY_j^x(t) D^{(1)}(D_jg )(Y^x(t))\,\cdot\, D_kY^x(t)
 \right). \nonumber
    \end{align}

\item[\upshape (iii)]
    For all $0 < t \le \tilde \term $
and $i,k \in \{1,\ldots,n\}$, the derivative
 $D_{ik} T_t g(x)$ is continuous for $x \in K$.
\end{enumerate}
\end{lemma}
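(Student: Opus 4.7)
The plan is to differentiate under the expectation twice, invoking Proposition \ref{path} to supply both pointwise (in $\omega$) limits of the relevant difference quotients and deterministic $L^\infty(\Omega)$ dominations in $|u|<1$ that render the bounded convergence theorem applicable at each step. Throughout we use that, by hypothesis, whenever $x\in K_3$ and $t\le \tilde\term$, the path $Y^x(t)$ lies in the convex set $M$; in particular, for $x\in K$ and $|u|<1$, both $Y^x(t)$ and $Y^{x+ue_i}(t)$ lie in $M$, as does the whole segment between them.

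For part (i), fix $x\in K$ and $i\in\{1,\ldots,n\}$. Writing
\[
    \frac{T_tg(x+ue_i)-T_tg(x)}{u}
    = \Ee\int_0^1 D^{(1)}g\big(Y^x(t)+\theta(Y^{x+ue_i}(t)-Y^x(t))\big)\cdot \frac{Y^{x+ue_i}(t)-Y^x(t)}{u}\,d\theta,
\]
the integrand is bounded, by \eqref{5.4.2}, by $2\|D^{(1)}g\|_\infty$ uniformly in $\omega$, $\theta$ and $|u|<1$, while Proposition \ref{path}(ii) combined with continuity of $D^{(1)}g$ gives pathwise convergence to $D^{(1)}g(Y^x(t))\cdot D_iY^x(t)$. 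Bounded convergence yields \eqref{Tt1a}.

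For part (ii), we differentiate the right-hand side of \eqref{Tt1a} in direction $e_k$ and decompose
\[
\begin{aligned}
&\frac{D^{(1)}g(Y^{x+ue_k}(t))\cdot D_iY^{x+ue_k}(t)-D^{(1)}g(Y^x(t))\cdot D_iY^x(t)}{u}\\
&\qquad = \frac{D^{(1)}g(Y^{x+ue_k}(t))-D^{(1)}g(Y^x(t))}{u}\cdot D_iY^{x+ue_k}(t)\\
&\qquad\quad + D^{(1)}g(Y^x(t))\cdot \frac{D_iY^{x+ue_k}(t)-D_iY^x(t)}{u}.
\end{aligned}
\]
Since $g\in C_b^2$, estimate \eqref{5.4.2} dominates the first difference quotient by a constant multiple of $\|D^{(2)}g\|_\infty$, and \eqref{5.4.4} dominates the second; the remaining factors are controlled by \eqref{5.4.3} and $\|D^{(1)}g\|_\infty$. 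Pathwise, the chain rule applied to $x\mapsto D^{(1)}g(Y^x(t))$ (using the existence and size of $D_kY^x(t)$ from Proposition \ref{path}(ii)) identifies the first limit with $\sum_j D_iY_j^x(t)\,D^{(1)}(D_jg)(Y^x(t))\cdot D_kY^x(t)$, and Proposition \ref{path}(iv) identifies the second with $D^{(1)}g(Y^x(t))\cdot D_{ik}Y^x(t)$. Bounded convergence produces \eqref{Tt1b}.

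For part (iii), take $x_n\to x$ in $K$. For each fixed $\omega$ and $t\le\tilde\term$, Proposition \ref{path}(i), (iii), (v) give continuity of the maps $x\mapsto Y^x(t)$, $x\mapsto D_iY^x(t)$, $x\mapsto D_{ik}Y^x(t)$; together with the continuity of $g$, $D^{(1)}g$, $D^{(2)}g$ this yields pathwise convergence of the integrand in \eqref{Tt1b}, while the same uniform dominations as in (ii) permit interchange of limit and expectation. The main delicate point is the bookkeeping in part (ii): one must split the difference quotient into two factor-differences each of which separately admits a pointwise limit \emph{and} a deterministic $L^\infty(\Omega)$ bound for $|u|<1$, and the chain-rule identification of the first limit uses the quantitative bound \eqref{5.4.3} rather than mere differentiability. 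Once Proposition \ref{path} is in hand, the remaining steps reduce to routine dominated convergence.
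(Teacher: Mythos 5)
Your proposal is correct and follows essentially the same route as the paper: differentiate under the expectation via Taylor expansion, use Proposition \ref{path} for the pathwise existence of $D_iY^x(t)$, $D_{ik}Y^x(t)$ and for the deterministic, $\omega$-uniform bounds on the difference quotients, and conclude by bounded convergence; the product-rule splitting in part (ii) and the continuity argument in part (iii) match the paper's, up to an immaterial regrouping of the cross terms.
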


\begin{proof}
\textbf{(i)}
Let $0 < t \le \tilde \term $, $x \in K$, fix $i \in \{1,\ldots,n\}$ and let $h = u e_i$. By Taylor's theorem
and  \eqref{5.4.2}, we get,
\begin{align*}
    D_i T_t g(x)
    &= \lim_{u \to 0} \frac{T_t g(x + h) - T_t g(x)}{u} \\
    &= \lim_{u \to 0} \Ee\left(\frac{g(Y^{x+h}(t)) - g(Y^x(t))}{u}\right) \\
    &= \lim_{u \to 0} \Ee\left(\frac{D^{(1)} g(Y^x(t)) \cdot (Y^{x + h}(t) -Y^x(t))}{u}\right) \\
    &\quad+ \lim_{u \to 0} \Ee\left(\frac{\sum_{1\le l,m \le n}D_{lm}g(\xi)
 (Y_l^{x+h}(t) - Y_l^x(t))(Y_m^{x + h}(t) - Y_m^x(t))}{2 u}\right) \\
    &= \Ee\left(D^{(1)} g (Y^x(t))\cdot D_i Y^x(t)\right) + \lim_{u \to 0} \Ee\left(O\left(\frac{\|Y^{x + h}(t) - Y^x(t)\|^2}{u}\right)\right) \\
    &= \Ee\left(D^{(1)} g (Y^x(t)) \cdot D_i Y^x(t)\right),
\end{align*}
where $\xi = \xi_{x,h,t,l,m}$ is an intermediate point between $Y^{x}(t)$ and $Y^{x+h}(t)$. This 
yields \eqref{Tt1a}.

\medskip\textbf{(ii)}
Fix $i,k \in \{1,\ldots,n\}$ and let $h = u e_k$. We have, using (i),
\begin{align*}
    D_{ik} T_t g(x)
    &=\lim_{u \to 0} \frac{D_i T_t g(x + h) - D_i T_t g(x)}{u} \\
    &=\lim_{u \to 0} \Ee \left(\frac{D^{(1)}g (Y^{x + h}(t))\cdot D_i Y^{x + h}(t) - D^{(1)} g (Y^x(t))\cdot D_i Y^x(t)}{u}\right) \\
    &=\lim_{u \to 0} \Ee \left(\frac{D^{(1)} g (Y^{x + h}(t))\cdot (D_i Y^{x + h}(t) -  D_i Y^x(t))}{u}\right) \\
    &\qquad+ \lim_{u \to 0} \Ee \left(\frac{D_i Y^x(t)\cdot (D^{(1)} g (Y^{x + h}(t)) - D^{(1)} g (Y^x(t)))}{u}\right) \\
    &= \text{I} + \text{II}.
\end{align*}
By \eqref{5.4.4} and bounded convergence theorem,
$$
    \text{I} = \Ee\left(D^{(1)} g (Y^x(t)) \cdot D_{ik} Y^x(t)\right).
$$
We apply the Taylor theorem, \eqref{5.4.2} and the bounded convergence theorem to see that
\begin{align*}
    \text{II}
    &= \lim_{u \to 0} \Ee \left( \frac{\sum_{j = 1}^n D_i Y_j^x(t) (D_j g (Y^{x + h}(t)) - D_j g (Y^x(t)))}{u}\right) \\
    &= \lim_{u \to 0} \Ee \left( \frac{\sum_{j = 1}^n D_i Y_j^x(t) D^{(1)}(D_j g)(Y^x(t)) \cdot (Y^{x + h}(t) - Y^x(t))}{u}\right) \\
    &\qquad+ \lim_{u \to 0} \Ee \left(O\left(\frac{\|Y^{x + h}(t) - Y^x(t)\|^2}{u}\right) \right) \\
    &= \Ee\left(\sum_{j = 1}^n D_i Y_j^x(t) D^{(1)}(D_j g)(Y^x(t)) \cdot D_k Y^x(t)\right).
\end{align*}
This proves \eqref{Tt1b}.

\medskip\textbf{(iii)}
By Proposition \ref{path}, all derivatives on the right hand side of \eqref{Tt1b} are continuous. Thus the function $D_{ik}T_t g(x)$ is continuous for $x \in K$, $i,k \in \{1,\ldots,n\}$ and $0 < t \le \tilde \term $. This proves (iii).
\end{proof}

\begin{proof}[Proof of Theorem \ref{Tt}]
We set
$$
    \term  := \frac{1}{2\, \|D^{(1)}V\|_{\infty,M}} \wedge t_0.
$$
We will use induction. The induction step is the following.
Assume that $T_s f \in C_b^2$ for some $s \in [0,t_0]$. We will
 show that for all $r \le \term $ such that $s + r \le t_0$ we have $T_{s + r}f \in C^2$ and $\|T_{s + r}f\|_{(2)} < \infty$. To show this we use Lemma \ref{Tt1}. Put $g = T_s f$ and $t_1 = t_0 - s$. Note that $r \le \term  \wedge t_1 = \tilde \term $ and $g = T_s f$ satisfies the assumptions of Lemma \ref{Tt1}. Hence we obtain that $T_{r + s} f = T_r g \in C^2$. A combination of the estimates \eqref{Tt1a}, \eqref{Tt1b}, the fact that $\supp{T_r g} \subset K$ and the estimates from Proposition \ref{path} yield $\|T_rg\|_{(2)} < \infty$.

An assumption of Theorem \ref{Tt} states that $f\in C^2_b$. Hence, $T_0 f =f \in C^2_b$. The induction step shows that $T_s f  \in C^2_b$ for all $s \leq \term \land t_0$. Subsequent induction steps extend this claim to $T_s f \in C^2_b$ for all $s \leq j\term \land t_0$, $j=2,3, \dots$ Therefore,  $T_{s} f \in C^2_b$ for all $s \le t_0$.
\end{proof}

\begin{proposition}
\label{coreTt}
Let $\{T_t\}_{t \ge 0}$ be the semigroup given by \eqref{semigroupZS} of the the process $(Y_t,S_t)$ defined by \eqref{sdemain}. Let $C_*^2(\Rt)$ be the class of functions given by Definition \ref{class}. We have
$$
T_t: \, C_*^2(\Rt) \to C_*^2(\Rt).
$$
\end{proposition}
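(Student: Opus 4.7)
The plan is to deduce Proposition~\ref{coreTt} directly from Theorem~\ref{Tt} once appropriate sets $K$ and $M$ have been identified, exploiting the fact that functions in $C_*^2(\Rt)$ are compactly supported in the $s$-direction. Fortunately, all the analytic machinery has already been built in the preceding section, and the discussion immediately following the statement of Theorem~\ref{Tt} essentially dictates the choice of $K$ and $M$.

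First I would fix $f \in C_*^2(\Rt)$ with $\supp(f) \subset \R \times [-r,r]$ and a time horizon $t_0 < \infty$, and set
\begin{align*}
K &:= \R \times \bigl[-r - t_0\|W''\|_\infty,\; r + t_0\|W''\|_\infty\bigr],\\
M &:= \R \times \bigl[-r - 2t_0\|W''\|_\infty - 3,\; r + 2t_0\|W''\|_\infty + 3\bigr].
\end{align*}
Using the deterministic bound $|S_t^{(y,s)}| \le |s| + t\|W''\|_\infty$ that was already exploited in Lemma~\ref{gen1}, I would argue that if $(y,s) \notin K$ then $|S_t^{(y,s)}| > r$ for every $t \le t_0$, so $f(Y_t^{(y,s)}, S_t^{(y,s)}) \equiv 0$; hence $\supp(T_t f) \subset K$ for all $t \le t_0$. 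A second application of the same bound shows that $(Y_t^{(y,s)}, S_t^{(y,s)}) \in M$ whenever $(y,s) \in K_3 := \bigcup_{x \in K} B^*(x,3)$ and $t \le t_0$. The set $M$ is manifestly convex.

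Next I would recast the SDE \eqref{sdemain} as an instance of \eqref{sdeY} with $n = 2$, $X_1 = X$, $X_2 \equiv 0$, $V_1(y_1,y_2) = W'(y_1)y_2$ and $V_2(y_1,y_2) = W''(y_1)$. The only reason we cannot apply Theorem~\ref{Tt} with $M = \Rt$ is that $V_1$ fails to be bounded; but the $s$-boundedness of $M$ combined with the periodicity and $C^5$-smoothness of $W$ immediately yields $\|V\|_{(3),M} < \infty$. Lemma~\ref{existence} furnishes a strong solution with no explosion, so (on a suitably chosen probability space) the hypothesis $\zeta \equiv \infty$ of Theorem~\ref{Tt} is met. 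Invoking Theorem~\ref{Tt} then gives $T_t f \in C_b^2$ for every $t \le t_0$.

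Finally, $T_t f$ is bounded (being in $C_b^2$), has $s$-support contained in $[-r - t_0\|W''\|_\infty, r + t_0\|W''\|_\infty]$ by the first step, and is uniformly continuous because its gradient is bounded (so $T_t f$ is globally Lipschitz); thus $T_t f \in C_*^2(\Rt)$ for all $t \le t_0$, and since $t_0$ is arbitrary the proposition follows. There is no genuine obstacle here apart from the bookkeeping around $K$ and $M$: the real work has already been done in Proposition~\ref{path} and Theorem~\ref{Tt}, and the present proposition is the expected corollary.
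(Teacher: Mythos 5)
Your proposal is correct and follows essentially the same route as the paper: the same choice of $K$ and $M$, the same recasting of \eqref{sdemain} as \eqref{sdeY} with $V_1(y_1,y_2)=W'(y_1)y_2$ bounded on $M$ by periodicity of $W$, and the same appeal to Theorem \ref{Tt} followed by the support observation. The only (welcome) addition is your explicit remark that uniform continuity follows from the boundedness of the gradient, which the paper leaves implicit.
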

\begin{proof}
We will repeat some of the arguments given before the statement of Theorem \ref{Tt}. Note that the SDE \eqref{sdemain} is of the form \eqref{sdeY}. By Lemma \ref{existence} there exists a unique solution of \eqref{sdemain} with explosion time $\zeta((y,s),\omega) \equiv \infty$ for all $(y,s) \in \Rt$ and $\omega \in \Omega$. Suppose that
 $f \in C_*^2(\Rt)$. Then $\supp{f} \subset \R \times [-r,r]$, for some $r > 0$. Fix $t_0 > 0$. By \eqref{suppK}, for any $t \le t_0$, we have
\begin{align}\label{5.4.10}
    \supp{T_t f} \subset K := \R \times \big[-r - t_0 \|W''\|_{\infty}, \, r + t_0 \|W''\|_{\infty}\big].
\end{align}
We have
$$
    K_3 = \bigcup_{(y,s) \in K}B^*((y,s),3) = \R \times \big(-r - t_0 \|W''\|_{\infty} - 3, \, r + t_0 \|W''\|_{\infty} + 3\big).
$$
Let
$$
    M = \R \times \big[-r - 2t_0 \|W''\|_{\infty} - 3, \,r + 2t_0 \|W''\|_{\infty} + 3\big].
$$
By \eqref{M} we have $(Y_t^{(y,s)},S_t^{(y,s)}) \in M$ for all $(y,s) \in K_3$. Rewriting \eqref{sdemain} as \eqref{newsde} we have $V_1(y_1,y_2) = W'(y_1)y_2$, $V_2(y_1,y_2) = W''(y_1)$. Since $W \in C^5$ and since it is periodic, we get $\|V\|_{(3),M} < \infty$. Therefore, the solution of \eqref{newsde} satisfies the assumptions of Theorem \ref{Tt}. It follows that for any $t \le t_0$ we have
$$
    T_t f \in C^2 \quad\text{and}\quad \|T_t f\|_{(2)} < \infty.
$$
This and \eqref{5.4.10} yield
 $T_t f \in C_*^2(\Rt)$.
\end{proof}

\begin{proof}[Proof of Proposition \ref{TtD}]
Suppose that
 $f \in C_c^2(\uni \times \R)$. Then $\tilde{f} \in C_*^2(\Rt)$ where $\tilde{f}$ is given by \eqref{ftilde2}. By Proposition \ref{coreTt}, $T_t \tilde{f} \in C_*^2(\Rt)$. Using this and \eqref{doubletilde} we get $T_t^\uni f \in C_c^2(\uni \times \R)$.
\end{proof}


\begin{thebibliography}{99}\frenchspacing
\bibliographystyle{plain}

\bibitem{BBCH}
R.\ Bass, K.\ Burdzy, Z.\ Chen and M.\ Hairer: Stationary distributions for diffusions with inert drift. \emph{Probab. Theory Rel. Fields} \textbf{146} (2010), 1--47.

\bibitem{BenLR} M.~Bena\"im, M.~Ledoux and O.~Raimond, Self-interacting diffusions.
\emph{Probab. Theory Rel. Fields} \textbf{122} (2002), 1--41.

\bibitem{BenR1} M.~Bena\"im and O.~Raimond, Self-interacting
    diffusions. II. Convergence in law.  \emph{Ann. Inst. H.
    Poincar\'e Probab. Statist.} \textbf{39} (2003), 1043--1055.

\bibitem{BenR2} M.~Bena\"im and O.~Raimond, Self-interacting diffusions. III.
Symmetric interactions. \emph{Ann. Probab.} \textbf{33} (2005),
1717--1759.


\bibitem{BRO} N.~Bou-Rabee and H.~Owhadi, Ergodicity of Langevin
processes with degenerate diffusion in momentums. \emph{Int. J.
Pure Appl. Math.} \textbf{45}  (2008), 475--490.

\bibitem{BKS}
K.\ Burdzy, T.\ Kulczycki and R.\ Schilling, Stationary distributions for jump processes with memory. Preprint (2010).

\bibitem{BW2004}
K.\ Burdzy and D.\ White: A Gaussian oscillator. \emph{El. Comm. Probab.} \textbf{9} (2004), paper 10, pp. 92--95.

\bibitem{BW}
K.\ Burdzy and D.\ White: Markov processes with product-form stationary distribution. \emph{El.\ Comm.\ Probab.} \textbf{13}  (2008), 614--627.

\bibitem{dyn1}
E.B.\ Dynkin: \emph{Markov Processes (Vol.\ 1)}. Springer, Berlin 1965. 

\bibitem{EK}
S.N.\ Ethier and T.G.\ Kurtz: \emph{Markov Processes: Characterization and Convergence}. John Wiley \& Sons, New York 1986.

\bibitem{glo}
J.\ Glover: Markov functions. \emph{Ann.\ Inst.\ Poincar\'e B} \textbf{27} (1991), 221--238.


\bibitem{kunita}
H.\ Kunita: SDEs based on L\'evy processes and stochastic flows of diffeomorphisms. In: M.M.\ Rao (ed.): \emph{Real and Stochastic Analysis}. Birkh\"{a}user, New York 2005, 305--374.

\bibitem{Lach}
A.\ Lachal: Applications de la th\'eorie des excursions \`a l'int\'egrale du mouvement
brownien. \emph{S\'em. Probab. XXXVIII}. Springer, Lecture Notes in Math. \textbf{1801} (2003), 109--195.

\bibitem{Pr}
P.\ Protter: \emph{Stochastic Integration and Differential Equations}, Springer, Berlin 2004 (2nd ed.).

\bibitem{sch-sch}
R.L.\ Schilling, A.\ Schnurr: The symbol associated with the solution of a stochastic differential equation.  To appear in \emph{Electron. J. Probab.} (2010).

\bibitem{sha}
M.\ Sharpe: \emph{General Theory of Markov Processes}. Academic Press, Boston 1988. 

\bibitem{shef}
S.\ Sheffield: Gaussian free fields for mathematicians.  \emph{Probab. Theory Rel. Fields} \textbf{139} (2007), 521--541.


\bibitem{Sim}
T.\ Simon: Support theorem for jump processes. \emph{Stoch. Proc. Appl. \bf 89} (2000) 1--30.


\bibitem{S}
Ya.G.\ Sinai: \emph{Topics in Ergodic Theory.} Princeton University Press, Princeton (NJ) 1994.
\end{thebibliography}
\end{document}